\newcolumntype{C}[1]{>{\centering\arraybackslash}b{#1}}
\newcolumntype{R}[1]{>{\raggedleft\arraybackslash}b{#1}}
\newcolumntype{L}[1]{>{\raggedright\arraybackslash}b{#1}}
\newcolumntype{M}[1]{>{\centering}m{#1}}
\newtheorem{theo}{Theorem}[section]
\newtheorem{defin}{Definition}[section]
\newtheorem{lem}{Lemma}[section]
\newtheorem{prop}{Proposition}[section]
\newtheorem{remark}{Remark}[section]
\numberwithin{equation}{section}
\pgfplotsset{compat=newest}
\pgfplotsset{compat=1.12}
\date{}
\title{Filamentation near monotone zonal vortex caps}
\author{Gian Marco Marin\qquad Emeric Roulley}
\begin{document}
	\maketitle
	\begin{abstract}
		We study the Euler equations on a rotating unit sphere, focusing on the dynamics of vortex caps. Leveraging the $L^1$-stability of monotone, longitude-independent profiles, we demonstrate that certain ill-prepared initial data within the vortex cap class exhibit an instability characterized by the growth of the interface perimeter. These configurations are nearly equivalent in area to a zonal vortex cap but are perturbed by a localized latitudinal bump. By comparing the longitudinal flows at points along the zonal interface and within the bump region, we track the induced stretching and capture the underlying instability mechanism.
	\end{abstract}
\tableofcontents
\section{Introduction}
We begin this document by presenting the barotropic model, namely the Euler equations on the rotating unit sphere, and discuss some relevant related literature. Then, we present the notion of vortex cap solutions. Finally, we state our main instability result of vortex caps "near" the zonal stationary ones.
\subsection{The model and associated literature}
The unit sphere is denoted
	$$\mathbb{S}^2\triangleq\left\{\mathbf{x}\triangleq(x_1,x_2,x_3)\in\mathbb{R}^3\quad\textnormal{s.t.}\quad |\mathbf{x}|_3^2\triangleq x_1^2+x_2^2+x_3^2=1\right\}.$$
    The sphere $\mathbb{S}^2$ is assumed to rotate uniformly around the north pole at constant speed $\gamma\in\mathbb{R}.$ We consider a fluid on this sphere and denote $u,\omega$ its velocity field and vorticity, respectively. We also define the absolute vorticity as follows
\begin{equation}\label{def abso-vort}
    \zeta(t,\mathbf{x})\triangleq\omega(t,\mathbf{x})-2\gamma x_3.
\end{equation}
    The fluid is supposed to follow the Eulerian evolution law and therefore solves the following set of equations  called \textit{barotropic model}, see \cite[Sec. 13.4.1]{HH04}. This model plays a central role in geophysical fluid dynamics as it serves as a fundamental tool for understanding large-scale atmospheric flows, aiding in the simulation of atmospheric behavior for both Earth and other planets, with applications ranging from weather prediction and hurricane dynamics to planetary climate analysis.
	\begin{equation}\label{barotropic model}
		\begin{cases}
			\partial_t\zeta+u\cdot\nabla\zeta=0,\\
			u=\nabla^{\perp}G[\omega],\\
			\Delta G[\omega]=\omega,
		\end{cases}
	\end{equation}
    where $\Delta,$ $\nabla^{\perp}$ are the Laplacian and orthogonal gradient on the sphere, while $G[\omega]$ is the stream function defined through the integral relation, see \cite{BD15},
\begin{equation}\label{Green kernel}
    G[\omega](t,\mathbf{x})=\int_{\mathbb{S}^2}G(\mathbf{x},\mathbf{x}')\omega(t,\mathbf{y})d\sigma(\mathbf{x}'),\qquad G(\mathbf{x},\mathbf{y})\triangleq\frac{1}{2\pi}\log\left(\frac{|\mathbf{x}-\mathbf{y}|_{3}}{2}\right)-\frac{\log(2)}{4\pi}\cdot
\end{equation}
In the above expression, $\sigma$ denotes the classical surface measure on the unit $2$-sphere. Since $\mathbb{S}^2$ is a compact manifold, the vorticity and absolute vorticity are subject to the Gauss constraint
\begin{equation}\label{Gauss constraint}
    \int_{\mathbb{S}^2}\zeta(t,\mathbf{x})d\sigma(\mathbf{x})=\int_{\mathbb{S}^2}\omega(t,\mathbf{x})d\sigma(\mathbf{x})=0.
\end{equation}
Let us mention that
$$0=\partial_{t}\zeta+u\cdot\nabla\zeta=\partial_{t}\omega+u\cdot\nabla(\omega-2\gamma x_3)$$
and, compared to the classical 2D Euler equations, the additional term $-2\gamma u\cdot\nabla x_3$ is the Coriolis force due to the rotation of the sphere. Let us now discuss a bit more about the manifold structure of $\mathbb{S}^2.$ It is seen as a smooth manifold with atlas given by the following two local charts $\psi_1,\psi_2:(0,\pi)\times(0,2\pi)\to\mathbb{R}^3$
	\begin{align*}
		\psi_1(\theta,\varphi)&\triangleq\big(\sin(\theta)\cos(\varphi),\sin(\theta)\sin(\varphi),\cos(\theta)\big),\\
		\psi_2(\vartheta,\phi)&\triangleq\big(-\sin(\vartheta)\cos(\phi),-\cos(\vartheta),-\sin(\vartheta)\sin(\phi)\big).
	\end{align*}
    Given a function $f:\mathbb{S}^2\to\mathbb{R}$, we denote
    $$\mathtt{f}(\theta,\varphi)\triangleq f\big(\psi_1(\theta,\varphi)\big).$$
    In the sequel, we shall identify both functions passing from Cartesian to spherical coordinates keeping the same notation $f=\mathtt{f}$. In particular, in the local chart $\psi_1,$ one has
    \begin{equation}\label{x3cost}
        x_3=\cos(\theta).
    \end{equation}
	In what follows, we may restrict our discussion to the local chart $\psi_1$ where the variables are the co-latitude $\theta$ and the longitude $\varphi$, respectively. Nevertheless, one can follow the argument while working in the local chart $\psi_2$ and then cover the whole sphere. The manifold $\mathbb{S}^2$ is endowed with a Riemannian structure where the metric is given (in the chart $\psi_1$) by
	\begin{equation}\label{def metric}
	    \mathtt{g}_{\mathbb{S}^2}(\theta,\varphi)\triangleq d\theta\otimes d\theta+\sin^2(\theta)d\varphi\otimes d\varphi
	\end{equation}
	and the associated Riemannian volume writes (still in the chart $\psi_1$)
	$$\sigma=\sin(\theta) d\theta\wedge d\varphi.$$
    Therefore, the integration on the sphere is
    $$\int_{\mathbb{S}^2}f(\mathbf{x})d\sigma(\mathbf{x})=\int_{0}^{2\pi}\int_{0}^{\pi}f(\theta,\varphi)\sin(\theta)d\theta d\varphi.$$
	The north pole is $N\triangleq\{\theta=0\}$ while the south one is $S\triangleq\{\theta=\pi\}.$ Throughout the document, we shall use the notation: given $\theta_{*}\in(0,\pi)$, the parallel at the co-latitude $\theta_*$ is the set
    $$\{\theta=\theta_{*}\}\triangleq\big\{\psi_1(\theta_*,\varphi),\quad\varphi\in[0,2\pi]\big\}.$$
    For any $\mathbf{x}=\psi_1(\theta,\varphi)\in\mathbb{S}^2$ the tangent space $T_{\mathbf{x}}\mathbb{S}^2$ admits the orthonormal basis $(\mathbf{e}_{\theta}(\mathbf{x}),\mathbf{e}_{\varphi}(\mathbf{x}))$ given by
	$$\mathbf{e}_{\theta}(\mathbf{x})\triangleq\partial_{\theta}\psi_1(\theta,\varphi),\qquad\mathbf{e}_{\varphi}(\mathbf{x})\triangleq\frac{\partial_{\varphi}\psi_1(\theta,\varphi)}{\sin(\theta)}\cdot$$
	This allows to give an expression of the gradient and orthogonal gradient in this basis
	$$\nabla f\triangleq\left(\partial_{\theta}f,\frac{\partial_{\varphi}f}{\sin(\theta)}\right)_{(\mathbf{e}_{\theta},\mathbf{e}_{\varphi})},\qquad\nabla^{\perp}f\triangleq\left(-\frac{\partial_{\varphi}f}{\sin(\theta)},\partial_{\theta}f\right)_{(\mathbf{e}_{\theta},\mathbf{e}_{\varphi})}.$$
    The Laplacian expresses as
    \begin{equation}\label{Laplacian sphere}
        \Delta f(\theta,\varphi)=\frac{1}{\sin(\theta)}\partial_{\theta}\big[\sin(\theta)\partial_{\theta}f(\theta,\varphi)\big]+\frac{1}{\sin^2(\theta)}\partial_{\varphi}^2f(\theta,\varphi).
    \end{equation}
    In order to be well-defined, the set of equations \eqref{barotropic model} is supplemented by the following impermeability conditions at the poles: denoting $u=(u_{\theta},u_{\varphi})_{(\mathbf{e}_{\theta},\mathbf{e}_{\varphi})}$, then
	\begin{equation}\label{impermea}
	    \forall t\geqslant0,\quad\forall\varphi\in\mathbb{T},\quad u_{\theta}(t,0,\varphi)=0=u_{\theta}(t,\pi,\varphi).
	\end{equation}

    The system \eqref{barotropic model} is invariant under rotation around the vertical axis (passing through the poles). As a consequence, any longitude-independant profile $\zeta(\theta,\varphi)=\zeta(\theta)$ is a trivial stationary solution called \textit{zonal solution}. Any function $G$ solving
    \begin{equation}\label{stateq}
        \Delta G(\theta,\varphi)-2\gamma\cos(\theta)=F\big(G(\theta,\varphi)\big)
    \end{equation}
    provides the stream function of a stationary solution of \eqref{barotropic model}. Notice that the reciprocal is not true in general. In their work \cite{CG22}, Constantin and Germain established that solutions to equation \eqref{stateq} where $F'>-6$ are necessarily zonal (up to a rotation). In addition they are stable in $H^2(\mathbb{S}^2),$ provided that $F'<0.$ The threshold value $-6$ is significant, as it corresponds to the second eigenvalue of the Laplace–Beltrami operator. Some special zonal solutions called Rossby-Haurwitz are given by a stream function in the form
    $$G_n(\theta)=\beta Y_n^0(\theta)+\frac{2\gamma}{n(n+1)-2}\cos(\theta),\qquad\beta\in\mathbb{R}^*$$
    where $Y_n^0$ is the spherical harmonic. The integer $n\in\mathbb{N}$ is called the degree of the Rossby-Haurwitz solution. Reference \cite{CG22} also explored both local and global bifurcations of non-zonal solutions to equation \eqref{stateq}, emerging from Rossby–Haurwitz waves. The authors show that zonal Rossby–Haurwitz solutions of degree 2 are stable in the space $H^2(\mathbb{S}^2),$ while more general non-zonal solutions of the same type are unstable in this setting. More recently, Cao, Wang, and Zuo \cite{CWZ23} extended the stability analysis of degree 2 Rossby–Haurwitz waves to the $L^p(\mathbb{S}^2)$ spaces for $p\in(1,\infty)$. Furthermore, and of interest in our analysis, Caprino and Marchioro \cite{CM88} addressed $L^1$-Lyapunov stability for monotonic zonal vorticities within $L^p(\mathbb{S}^2)$, for $p\in(2,\infty)$. A precise statement is given later in Theorem \ref{thm stab CM}. This work deals with vortex cap solutions, which are special weak solutions where the absolute vorticity is uniform on domains forming a partition of the sphere. These solutions together with their linear and nonlinear stability has been intensively studied in the physics literature, see for instance \cite{CDG24,DP92,DP93,KSS18,KS21}, while their rigorous mathematical description (briefly recalled here in Section \ref{sec VC}) has been presented in \cite{GHR23}. In this latter, the third author proved the emergence of small amplitude uniformly rotating vortex cap solutions bifurcating from the zonal caps. Let us also mention that recently, some global in time solutions were obtained by desingularizing point vortex configurations like the symmetric pairs \cite{CLW24} or the  Von K\'arm\'an streets \cite{SZ24,SZ24-1}. At last, we highlight that in \cite{CDG24}, Dritschel, Constantin and Germain numerically studied the onset of the filamentation on both plane (near circular patches) and sphere (near zonal caps). This work serves as a one motivation for the present paper. 
    \subsection{Vortex cap solutions}\label{sec VC}
 The mathematical notion of vortex caps has been introduced in \cite{GHR23}. These are weak solutions to \eqref{barotropic model} that are piecewise constant absolute vorticities. They constitute the equivalent to the classical planar vortex patches and one of the main differences with the latter is the Gauss constraint \eqref{Gauss constraint}, which brings more rigidity and therefore complexifies the analysis with respect to the planar case.
 
	\begin{defin}[Vortex Cap]\label{vcap}
		Let $N\in\mathbb{N}\setminus\{0,1\}$ and consider the following partition of the sphere 
		$$\mathbb{S}^2=\bigsqcup_{k=1}^{N}A_k,\qquad\sigma(A_k)>0$$
		such that each boundary intersection (called interface) is diffeomorphic to a circle, i.e.
		$$\forall k\in\llbracket1,N-1\rrbracket,\quad\Gamma_k\triangleq A_{k}\cap A_{k+1}\cong \mathbb{S}^1.$$ Let $\omega_1,...,\omega_N\in\mathbb{R}$ such that 
        \begin{equation}\label{Gauss initial}
            \sum_{k=1}^{N}\omega_k\sigma(A_k)=0\qquad\textnormal{and}\qquad\forall k\in\llbracket1,N-1\rrbracket,\quad\omega_k\neq\omega_{k+1}.
        \end{equation}
        Let us consider an initial condition in the form $$\zeta_0\triangleq\sum_{k=1}^{N}\omega_k\mathbf{1}_{A_k}.$$
        Due to the transport structure \eqref{barotropic model} and the logarithmic singularity of the Green kernel \eqref{Green kernel}, the Yudovich theory applies and provides the existence and uniqueness of a Lagragian weak solution to \eqref{barotropic model} called \textit{vortex cap solution}, namely
        $$\zeta(t,\cdot)=\sum_{k=1}^{N}\omega_k\mathbf{1}_{A_k(t)}, \qquad A_k(t)\triangleq\phi(t,A_k),$$
        where $(t,\mathbf{x})\mapsto\phi(t,\mathbf{x})$ is the flow map associated with the vector field $u,$ that is
        $$\forall\mathbf{x}\in\mathbb{S}^2,\quad\partial_{t}\phi(t,\mathbf{x})=u\big(t,\phi(t,\mathbf{x})\big),\qquad\phi(0,\mathbf{x})=\mathbf{x}.$$
	\end{defin}
	\begin{remark}  The first condition in \eqref{Gauss initial} corresponds to the Gauss constraint \eqref{Gauss constraint} of the initial datum. Since the velocity field $u$ is divergence-free, then, for any $t\geqslant0,$ the flow map $\phi(t,\cdot)$ is measure preserving. Consequently, one has
        $$\forall k\in\llbracket 1,N\rrbracket,\quad\sigma\big(A_k(t)\big)=\sigma\big(\phi(t,A_k)\big)=\sigma(A_k)$$
        and therefore $\zeta(t,\cdot)$ also satisfies the Gauss constraint.
	\end{remark}
	This work makes a particular focus on trivial vortex cap solutions provided by the zonal caps, namely 
    $$\zeta_{\star}(\theta)=\omega_1\mathbf{1}_{0\leqslant\theta<\theta_1}+\omega_2\mathbf{1}_{\theta_1\leqslant \theta<\theta_2}+...+\omega_N\mathbf{1}_{\theta_{N-1}\leqslant \theta<\pi},$$
    with
    \begin{equation}\label{Gausszonal}
		\theta_0\triangleq0<\theta_1<\theta_2<...<\theta_N-1<\theta_N\triangleq\pi\qquad\textnormal{and}\qquad\sum_{k=1}^{N}\omega_k\big(\cos(\theta_{k})-\cos(\theta_{k-1})\big)=0.
	\end{equation}
    The second condition in \eqref{Gausszonal} is nothing but the Gauss constraint.
\subsection{Main result and strategy of proof}
    The study of filamentation/growth of perimeter is a very important topic in fluid mechanics in presence of free interface. Several results in this direction were obtained near steady solutions. We refer the reader for instance to \cite{CJ21,CJ22} for the planar vortex patch case and to \cite{CJ23} near the Hill vortex. We shall now present our main new result concerning the spherical geometry. The purpose of this study is to prove the following instability result in the vortex cap class "close to" the monotone zonal caps.
\begin{theo}\label{thm filamentation}
\textbf{$($Filamentation near monotone zonal vortex caps$)$}\\
    Let $N\in\mathbb{N}\setminus\{0,1\}$, $\mathtt{M}\geqslant1$ and 
    \begin{equation}\label{order thetas}
        0\triangleq\theta_0<\theta_1<\theta_2<...<\theta_{N-1}<\theta_N\triangleq\pi.
    \end{equation}
    Consider the monotone zonal cap 
    $$\zeta_{\star}(\theta)=\omega_1\mathbf{1}_{0\leqslant\theta<\theta_1}+\omega_2\mathbf{1}_{\theta_1\leqslant\theta<\theta_2}+...+\omega_N\mathbf{1}_{\theta_{N-1}\leqslant\theta\leqslant\pi},$$
    with
    \begin{equation}\label{monotone cond}
        \begin{cases}
        \omega_1<\omega_2<...<\omega_N, & \textnormal{if }\gamma\leqslant0,\\
        \omega_1>\omega_2>...>\omega_N, & \textnormal{if }\gamma\geqslant0
    \end{cases}
    \end{equation}
    and
    $$\sum_{k=1}^{N}\omega_k\big(\cos(\theta_k)-\cos(\theta_{k-1})\big)=0.$$
    There exists $\mu_0>0$, such that for all $\mu\in(-\mu_0,\mu_0),$ there exist $\kappa\triangleq\kappa(\mu)>0$ and $T_0\triangleq T_0(\mu)>0$ such that for all $T>T_0,$ there exists $\overline{\delta}\triangleq\overline{\delta}(\mu,\mathtt{M},T)>0$ such that the following holds:\\
    Given a vortex cap solution $t\mapsto\zeta(t,\cdot)$ of \eqref{barotropic model} with initial condition $\zeta_0$ satisfying the bounds
    $$\|\zeta_0-\zeta_{\star}\|_{L^1(\mathbb{S}^2)}<\overline{\delta}\qquad\textnormal{and}\qquad\|\zeta_0\|_{L^{\infty}(\mathbb{S}^2)}\leqslant\mathtt{M},$$
    and admitting an initial interface $\Gamma(0)$ such that
    $$\exists k_0\in\llbracket1,N-1\rrbracket,\quad\Gamma(0)\cap\{\theta=\theta_{k_0}\}\neq\varnothing\qquad\textnormal{and}\qquad\Gamma(0)\cap\{\theta=\theta_{k_0}+\mu\}\neq\varnothing,$$
    then the corresponding interface evolution $t\mapsto\Gamma(t)\triangleq\phi\big(t,\Gamma(0)\big)$ satisfies
    $$\sup_{0\leqslant t\leqslant T}\textnormal{Length}\big(\Gamma(t)\big)\geqslant\kappa(T-T_0).$$
\end{theo}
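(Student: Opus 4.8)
The plan is to exploit a differential-rotation mechanism: two fluid particles placed on the initial interface at co-latitudes $\theta_{k_0}$ and $\theta_{k_0}+\mu$ are advected with different longitudinal angular speeds, so that the sub-arc of the interface joining them winds around the polar axis at a rate bounded below and therefore stretches linearly in time. The three ingredients are a static analysis of the zonal angular velocity, the $L^{1}$-stability Theorem~\ref{thm stab CM} (used to confine the true solution near $\zeta_{\star}$), and a winding/length estimate on $\mathbb{S}^{2}$ minus its poles.

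\emph{Step 1: the zonal angular velocity.} Let $u_{\star}=\nabla^{\perp}G[\omega_{\star}]$ be the velocity field of the zonal cap; it is purely longitudinal, $u_{\star}=(0,\mathtt{u}_{\star}(\theta))_{(\mathbf{e}_{\theta},\mathbf{e}_{\varphi})}$, and I set $\Omega_{\star}(\theta)\triangleq\mathtt{u}_{\star}(\theta)/\sin(\theta)$, the rate at which the exact zonal flow rigidly rotates a particle at co-latitude $\theta$. Integrating $\frac{1}{\sin\theta}\partial_{\theta}\big(\sin\theta\,\partial_{\theta}G[\omega_{\star}]\big)=\zeta_{\star}+2\gamma\cos\theta$ once and invoking the Gauss constraint gives $\Omega_{\star}(\theta)=\sin^{-2}(\theta)\int_{0}^{\theta}\sin(s)\big(\zeta_{\star}(s)+2\gamma\cos(s)\big)\,ds$. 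The first — purely static — point I would establish is that there exists $\mu_{0}>0$ such that $\Omega_{\star}(\theta_{k_0})\neq\Omega_{\star}(\theta_{k_0}+\mu)$ whenever $0<|\mu|<\mu_{0}$, with a quantitative lower bound $|\Omega_{\star}(\theta_{k_0})-\Omega_{\star}(\theta_{k_0}+\mu)|\geqslant c(\mu)>0$. Here \eqref{monotone cond} is used: $\Omega_{\star}$ is piecewise real-analytic, $\theta_{k_0}$ is a corner of it, and the jump of $\Omega_{\star}'$ across $\theta_{k_0}$ equals $(\omega_{k_0+1}-\omega_{k_0})/\sin(\theta_{k_0})$, whose sign is fixed by the monotonicity hypothesis, and the non-degeneracy follows. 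Note moreover that $\Omega_{\star}$ is Lipschitz on a fixed neighbourhood of $\theta_{k_0}$.

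\emph{Step 2: a confined arc, stability, and longitudinal shear.} Pick $\mathbf{x}_{1}(0)\in\Gamma(0)\cap\{\theta=\theta_{k_0}\}$ and $\mathbf{x}_{2}(0)\in\Gamma(0)\cap\{\theta=\theta_{k_0}+\mu\}$. Along an arc of $\Gamma(0)$ joining $\mathbf{x}_{1}(0)$ to $\mathbf{x}_{2}(0)$, let $\mathbf{p}$ be the last point at co-latitude $\theta_{k_0}$ and then, continuing towards $\mathbf{x}_{2}(0)$, let $\mathbf{q}$ be the first point at co-latitude $\theta_{k_0}+\mu$; the arc $\gamma_{0}\subset\Gamma(0)$ from $\mathbf{p}$ to $\mathbf{q}$ then has co-latitude between $\theta_{k_0}$ and $\theta_{k_0}+\mu$, hence lies in a fixed band $\mathcal{B}_{0}$ between two parallels, compactly contained in $\mathbb{S}^{2}\setminus\{N,S\}$, on which $\sin\theta\geqslant c_{1}>0$. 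Set $\mathbf{p}(t)=\phi(t,\mathbf{p})$, $\mathbf{q}(t)=\phi(t,\mathbf{q})$ and $\gamma_{t}=\phi(t,\gamma_{0})\subset\Gamma(t)$. Now I use the stability: the cap vorticity is transported by a measure-preserving flow, so $\|\zeta(t)\|_{L^{\infty}}\leqslant\mathtt{M}$ for all $t$, and with $\|\zeta_{0}-\zeta_{\star}\|_{L^{1}}<\overline{\delta}$ this is exactly the setting of Theorem~\ref{thm stab CM}, giving $\sup_{t\geqslant0}\|\zeta(t)-\zeta_{\star}\|_{L^{1}}\leqslant\varepsilon(\overline{\delta},\mathtt{M})$ with $\varepsilon\to0$ as $\overline{\delta}\to0$; since $\omega(t)-\omega_{\star}=\zeta(t)-\zeta_{\star}$, the Biot--Savart law \eqref{Green kernel} (kernel $\nabla G$ of size $\mathrm{dist}^{-1}$) then yields the standard bound $\|u(t)-u_{\star}\|_{L^{\infty}(\mathbb{S}^{2})}\lesssim\|\zeta(t)-\zeta_{\star}\|_{L^{1}}^{1/2}\|\zeta(t)-\zeta_{\star}\|_{L^{\infty}}^{1/2}\leqslant\eta(\overline{\delta},\mathtt{M})$, with $\eta\to0$. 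As $u_{\star}$ has no $\theta$-component, $|\frac{d}{dt}\theta|=|u_{\theta}|\leqslant\eta$ along every trajectory, so on $[0,T]$ each point of $\gamma_{t}$ has drifted in co-latitude by at most $\eta T$; choosing $\overline{\delta}$ small (in terms of $\mu,\mathtt{M},T$) keeps $\gamma_{t}$ in an enlarged band $\mathcal{B}$ away from the poles with $\sin\theta\geqslant c_{1}/2$, and keeps $\theta(\mathbf{p}(t)),\theta(\mathbf{q}(t))$ within $\eta T$ of $\theta_{k_0},\theta_{k_0}+\mu$. Writing $\varphi_{\mathbf{p}}(t),\varphi_{\mathbf{q}}(t)$ for the continuously lifted, real-valued longitudes, the comparison of angular velocities reads
$$\frac{d}{dt}\big(\varphi_{\mathbf{p}}-\varphi_{\mathbf{q}}\big)=\frac{u_{\varphi}(t,\mathbf{p}(t))}{\sin\theta(\mathbf{p}(t))}-\frac{u_{\varphi}(t,\mathbf{q}(t))}{\sin\theta(\mathbf{q}(t))}=\big(\Omega_{\star}(\theta_{k_0})-\Omega_{\star}(\theta_{k_0}+\mu)\big)+O\big(\eta(1+T)\big),$$
using Step~1 and $\sin\theta\geqslant c_{1}/2$. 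Shrinking $\overline{\delta}$ once more makes the error term $<c(\mu)/2$, so $t\mapsto\varphi_{\mathbf{p}}(t)-\varphi_{\mathbf{q}}(t)$ is strictly monotone with derivative of modulus $\geqslant c(\mu)/2$, whence $\big|\big(\varphi_{\mathbf{p}}(t)-\varphi_{\mathbf{q}}(t)\big)-\big(\varphi_{\mathbf{p}}(0)-\varphi_{\mathbf{q}}(0)\big)\big|\geqslant\frac{c(\mu)}{2}\,t$ on $[0,T]$.

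\emph{Step 3: from longitudinal shear to perimeter; the main obstacle.} If $\textnormal{Length}(\Gamma(0))\geqslant\kappa(T-T_{0})$ the conclusion is immediate, so assume the opposite. Since $\gamma_{t}\subset\mathcal{B}$ avoids the poles, lift it to the universal cover $(0,\pi)\times\mathbb{R}$ of $\mathbb{S}^{2}\setminus\{N,S\}$; its winding $W(t)$ about the polar axis satisfies $W(t)\equiv\varphi_{\mathbf{q}}(t)-\varphi_{\mathbf{p}}(t)\pmod{2\pi}$ and depends continuously on $t$ (the flow is continuous), so $W(t)-\big(\varphi_{\mathbf{q}}(t)-\varphi_{\mathbf{p}}(t)\big)$ is constant in $t$ and hence $|W(t)|\geqslant\frac{c(\mu)}{2}t-|W_{0}|$, where $W_{0}$ is the fixed winding of $\gamma_{0}$. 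Moreover $\textnormal{Length}(\gamma_{t})\geqslant\big(\min_{\gamma_{t}}\sin\theta\big)\int_{\gamma_{t}}|d\varphi|\geqslant\frac{c_{1}}{2}|W(t)|$, while $|W_{0}|\leqslant c_{1}^{-1}\textnormal{Length}(\gamma_{0})\leqslant c_{1}^{-1}\textnormal{Length}(\Gamma(0))<c_{1}^{-1}\kappa(T-T_{0})$ because $\gamma_{0}\subset\mathcal{B}_{0}$. Combining these, $\textnormal{Length}(\Gamma(T))\geqslant\textnormal{Length}(\gamma_{T})\geqslant\frac{c_{1}c(\mu)}{4}T-\frac{\kappa}{2}(T-T_{0})$, so choosing $\kappa=\kappa(\mu)$ small enough (for instance $\kappa=\frac{c_{1}c(\mu)}{6}$) and $T_{0}=T_{0}(\mu)$ suitably gives $\sup_{0\leqslant t\leqslant T}\textnormal{Length}(\Gamma(t))\geqslant\kappa(T-T_{0})$ for all $T>T_{0}$. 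I expect the bulk of the work to lie in the quantitative non-degeneracy of $\Omega_{\star}$ in Step~1 (the careful use of the monotonicity hypothesis) and in the geometry of Step~3 — above all the verification, via the confinement of $\gamma_{0}$ to $\mathcal{B}_{0}$ together with the co-latitude-drift bound, that the relevant arc can never escape towards a pole and accumulate winding there at negligible cost in length.
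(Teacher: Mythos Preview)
Your three-step scheme --- non-degeneracy of the zonal angular velocity, $L^{1}$-stability $\Rightarrow$ $L^{\infty}$-velocity closeness $\Rightarrow$ co-latitude confinement and near-linear longitudinal drift, then a length $\gtrsim (\min\sin\theta)\cdot|\Delta\varphi|$ bound --- is exactly the paper's strategy (Lemma~\ref{lem alpha non zero}, Proposition~\ref{prop confinement}, Proposition~\ref{prop length}). Your extraction of a confined sub-arc $\gamma_{0}$ and your case analysis on $\textnormal{Length}(\Gamma(0))$ in Step~3 are in fact a touch more careful than the paper, which absorbs the initial longitudinal offset into $T_{0}$ under a ``without loss of generality'' hypothesis on $\Gamma(0)$.

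There is, however, a small but genuine gap in Step~1. The jump $\Omega_{\star}'(\theta_{k_0}^{+})-\Omega_{\star}'(\theta_{k_0}^{-})=(\omega_{k_0+1}-\omega_{k_0})/\sin\theta_{k_0}\neq 0$ only guarantees that \emph{at most one} of the one-sided derivatives vanishes; it does not by itself give $\Omega_{\star}(\theta_{k_0}+\mu)\neq\Omega_{\star}(\theta_{k_0})$ for $\mu$ on the side where the derivative may be zero (piecewise real-analyticity does not preclude local constancy on that side). The paper handles this (proof of Lemma~\ref{lem alpha non zero}) by a second-order Taylor expansion: when the first-order coefficient $\omega_{k_0+1}\sin^{2}\theta_{k_0}-2C(k_0)\cos\theta_{k_0}$ vanishes, the second-order coefficient is $C(k_0)/\sin^{2}\theta_{k_0}$, and monotonicity is used \emph{again} to see that $C(k_0)\triangleq\sum_{k\leqslant k_0}\omega_{k}(\cos\theta_{k-1}-\cos\theta_{k})$ cannot vanish simultaneously with $\omega_{k_0+1}$. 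Equivalently, you must rule out $\Omega_{\star}$ being constant on either adjacent interval, which boils down to showing $(\omega_{k_0+1},C(k_0))\neq(0,0)$ and $(\omega_{k_0},C(k_0-1))\neq(0,0)$; both follow from \eqref{monotone cond} since the summands of $C(\cdot)$ then have a common sign. Once this is added, your sketch goes through.
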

\begin{remark}
    Let us make the following remarks concerning the Theorem \ref{thm filamentation}.
    \begin{enumerate}
        \item The condition \eqref{monotone cond} ensures that
        $$\omega_{\star}(\theta)=\zeta_{\star}(\theta)+2\gamma\cos(\theta)$$
        is monotone. This is fundamental in our analysis based on the $L^1$-stability result Theorem \ref{thm stab CM} below.
        \item The value $|\mu|$ is a priori small (less than $\mu_0$), which corresponds to a small latitudinal bump. But if there is a large thin bump in particular, there is a point $\mathbf{x}_1$ as in the statement.
        \item The boundedness hypothesis $\|\zeta_0\|_{L^{\infty}(\mathbb{S}^2)}\leqslant\mathtt{M}$ is required along the proof but is not so much restrictive since $\mathtt{M}$ can be taken large and cover a huge set of initial data. The small parameter $\overline{\delta}(\mu,\mathtt{M},T)$ shrinks to zero as $\mathtt{M}\to\infty$ and $T\to\infty.$
        \item The picture of the theorem is that we take an initial cap $\zeta_0$ which is $L^1$-close enough to the zonal cap $\zeta_{\star}$ but with a little bump. Then, this bump will create the filamentation, see Figure \ref{figure}.
    \end{enumerate}
\end{remark}

    \begin{figure}
        \begin{minipage}{.45\linewidth}
        \includegraphics[width=9cm]{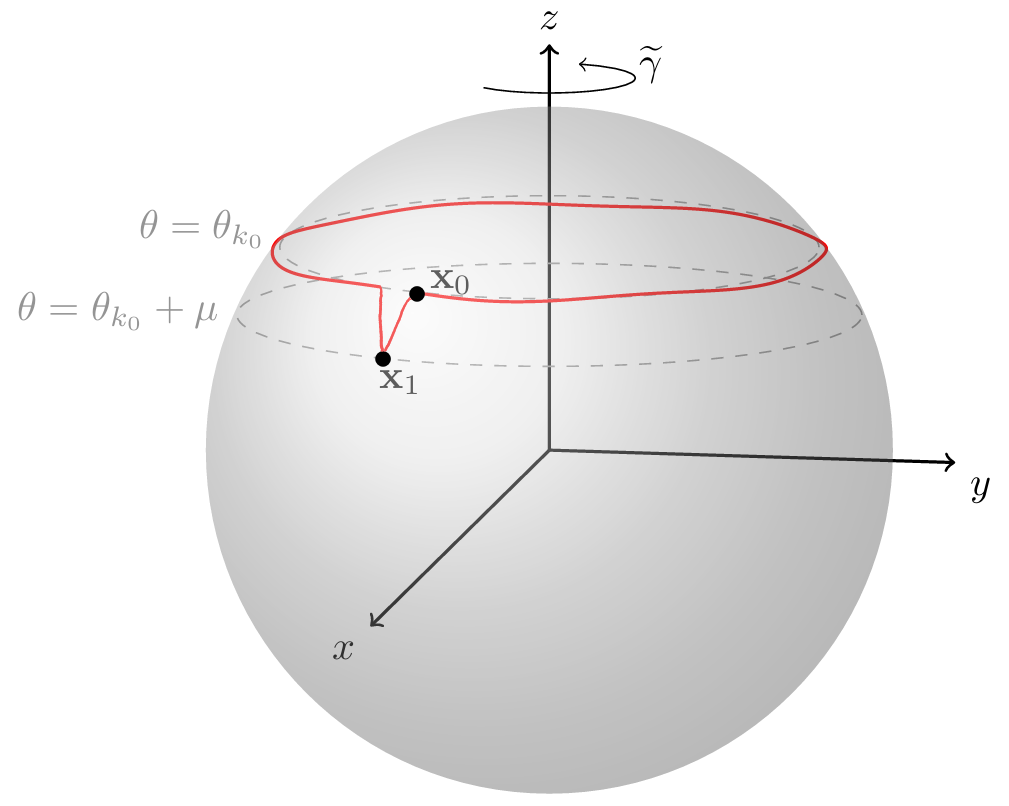}
        \end{minipage}
        \qquad\quad
        \begin{minipage}{.45\linewidth}
            \centering\includegraphics[width=8.5cm]{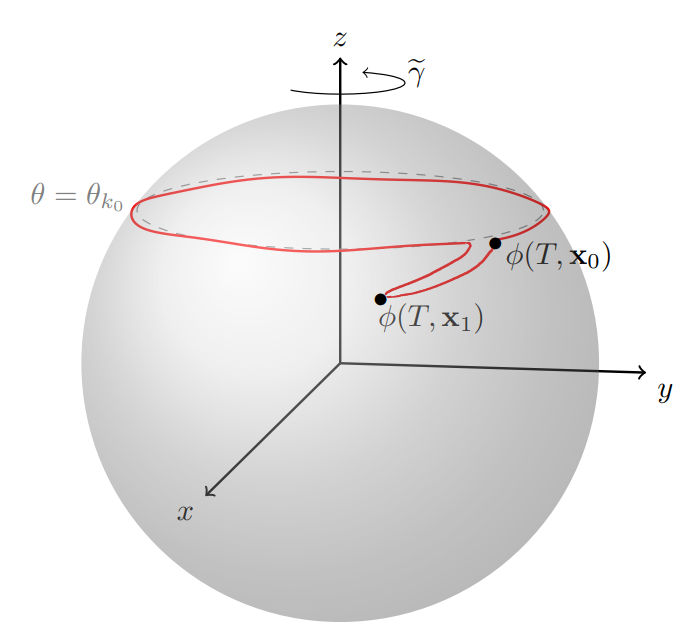}
        \end{minipage}
        \caption{Illustration of the filamentation Theorem \ref{thm filamentation}.}
        \label{figure}
    \end{figure}

Let us now give some key steps of the proof of Theorem \ref{thm filamentation}. We strongly make use of the following area stability result of monotone zonal profiles due to Caprino and Marchioro \cite[Thm. 1.1]{CM88}. The monotonicity is required since they use variational arguments with rearrangement functions.
\begin{theo}[$L^1$-stability of zonal profiles on the rotating sphere]\label{thm stab CM}
    Let $p>2$ and $\omega_{\star}\in L^p(\mathbb{S}^2)$ be a monotone zonal solution of \eqref{barotropic model}. Then, for any $\varepsilon>0,$ there exists $\delta>0$ such that for any $\omega_0\in L^p(\mathbb{S}^2)$ with
    $$\|\omega_0-\omega_{\star}\|_{L^1(\mathbb{S}^2)}<\delta,$$
    then the solution $t\mapsto\omega(t)$ of \eqref{barotropic model} with initial datum $\omega_0$ satisfies
    $$\sup_{t\geqslant 0}\|\omega(t,\cdot)-\omega_{\star}\|_{L^1(\mathbb{S}^2)}<\varepsilon.$$
\end{theo}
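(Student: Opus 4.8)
The plan is to exploit the two conservation laws of \eqref{barotropic model} --- the kinetic energy and the full distribution (rearrangement class) of the transported absolute vorticity --- together with a variational characterisation of the profile coming from a rearrangement inequality. Since $\omega-\omega_\star=\zeta-\zeta_\star$ with $\zeta=\omega-2\gamma x_3$, it suffices to control $\|\zeta(t)-\zeta_\star\|_{L^1}$. First I would record that the kinetic energy $E[\omega]=-\tfrac12\int_{\mathbb{S}^2}\omega\,G[\omega]\,d\sigma$ is conserved and rewrite it in terms of $\zeta$. Using that $x_3=\cos\theta$ is the degree-one spherical harmonic, so that $\Delta\cos\theta=-2\cos\theta$ and hence $G[\cos\theta]=-\tfrac12\cos\theta$, the energy reduces (up to an additive constant) to
\[
\widetilde E[\zeta]=-\tfrac12\int_{\mathbb{S}^2}\zeta\,G[\zeta]\,d\sigma+\gamma\int_{\mathbb{S}^2}\zeta\cos\theta\,d\sigma.
\]
Because $\zeta$ is transported by the measure-preserving flow $\phi(t,\cdot)$, the distribution function of $\zeta(t)$ equals that of $\zeta_0$; in particular $\zeta(t)$ remains in the rearrangement class $\mathcal R(\zeta_0)=\{\,\zeta:\zeta\text{ equimeasurable with }\zeta_0\,\}$ and $\widetilde E[\zeta(t)]=\widetilde E[\zeta_0]$ for all $t$.

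The second, and conceptually central, ingredient is that $\zeta_\star$ is the \emph{unique maximiser} of $\widetilde E$ over its own rearrangement class $\mathcal R(\zeta_\star)$. This is where monotonicity is essential. The quadratic part $-\tfrac12\int\zeta\,G[\zeta]$ has kernel $-G\propto-\log(|\cdot|_3/2)$, a strictly decreasing function of the geodesic distance; by the Riesz rearrangement inequality on $\mathbb{S}^2$ it is maximised, within a fixed rearrangement class, by the profile whose super-level sets are nested polar caps. The linear Coriolis term $\gamma\int\zeta\cos\theta$ breaks the north/south symmetry and, under the sign convention carried by the monotonicity hypothesis, orients these caps. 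Equivalently, the maximiser condition (in Burton's sense) reads that $\zeta_\star$ is a monotone function of $\delta\widetilde E/\delta\zeta[\zeta_\star]=-\psi_\star$, where $\psi_\star=G[\omega_\star]$; this is exactly the stationarity relation \eqref{stateq}, and the monotonicity of $\omega_\star$ together with $\mathrm{sgn}(\gamma)$ guarantees the correct (maximising, not saddle) sign. The equality cases of the rearrangement inequality give strictness: $\widetilde E[\zeta]=\widetilde E[\zeta_\star]$ for $\zeta\in\mathcal R(\zeta_\star)$ forces $\zeta=\zeta_\star$.

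With these two facts I would close the stability statement by a compactness/contradiction scheme. Suppose the conclusion fails: there are $\varepsilon_0>0$ and initial data $\omega_0^n\to\omega_\star$ in $L^1$ such that, at the first exit time $t_n$, one has $\|\zeta^n(t_n)-\zeta_\star\|_{L^1}=\varepsilon_0$. Write $\xi_n=\zeta^n(t_n)$. By conservation, $\xi_n$ is equimeasurable with $\zeta_0^n$ and $\widetilde E[\xi_n]=\widetilde E[\zeta_0^n]$; since $\zeta_0^n\to\zeta_\star$ in $L^1$, the distribution functions of $\xi_n$ converge to that of $\zeta_\star$ (so $\{\xi_n\}$ is uniformly integrable, being equimeasurable with an $L^1$-convergent sequence) and, using the $L^1$-contraction of the monotone zonal rearrangement, the class-maximal energies converge to $\widetilde E[\zeta_\star]$. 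Extracting a weak limit $\xi_n\rightharpoonup\bar\xi$ and using the weak continuity of $\widetilde E$ (the operator $G$ is smoothing/compact), $\bar\xi$ attains the maximal energy in the limiting class $\mathcal R(\zeta_\star)$; the strict maximality from the previous step forces $\bar\xi=\zeta_\star$. Finally I would upgrade weak to strong convergence using that a strictly convex Casimir of $\xi_n$ is fixed by its (converging) distribution, contradicting $\|\xi_n-\zeta_\star\|_{L^1}=\varepsilon_0$.

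The main obstacle is precisely this last passage from ``energy near-maximal with the correct distribution'' to genuine $L^1$ proximity. The energy $\widetilde E$ is \emph{not} weakly-$L^1$ continuous --- concentration of vorticity can manufacture energy through the logarithmic kernel --- so the weak-limit argument requires quantitative control of concentration; equimeasurability supplies uniform integrability but not a uniform $L^p$ bound, and this is the delicate point that the hypothesis $p>2$ is there to handle. Two routes seem viable: either establish a \emph{quantitative} stability version of the Riesz rearrangement inequality, bounding the measure of the symmetric difference of the super-level sets $\{\zeta(t)>s\}\,\triangle\,\{\zeta_\star>s\}$ by the energy deficit uniformly over the class (and then integrate in $s$ via the layer-cake formula), or exploit the $L^1$-contraction property of the monotone zonal rearrangement together with the conserved distribution to bound $\|\zeta(t)-\zeta_\star\|_{L^1}$ directly. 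In both approaches the monotonicity of $\omega_\star$ is what makes the comparison meaningful, in agreement with the variational method of Caprino and Marchioro \cite{CM88}.
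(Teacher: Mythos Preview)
The paper does not prove this theorem; it is quoted verbatim as \cite[Thm.~1.1]{CM88} and used as a black box. There is therefore no ``paper's own proof'' to compare against. Your sketch is in the right spirit: conservation of energy and of the rearrangement class of $\zeta$, together with a Riesz-type characterisation of the monotone zonal profile as the unique energy maximiser in its class, is precisely the variational architecture of Caprino--Marchioro.

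That said, what you have written is a roadmap with self-identified gaps rather than a proof. You flag the central difficulty --- passing from near-maximal energy plus correct distribution to genuine $L^1$ closeness --- and then list two ``viable routes'' without carrying out either. Concretely: (i) you invoke weak continuity of $\widetilde E$ (``$G$ is smoothing/compact'') and, two sentences later, concede that $\widetilde E$ is \emph{not} weakly-$L^1$ continuous; these claims are in tension, and the concentration obstruction you name is exactly what blocks the naive weak-limit argument. (ii) The hypothesis $p>2$ is gestured at but never deployed: the statement gives $\omega_0\in L^p$ for a single datum, not a uniform $L^p$ bound along your contradiction sequence $\omega_0^n$, so you must say what additional control it actually supplies (in \cite{CM88} it enters through an estimate on the stream function rendering the energy continuous on the admissible set). (iii) The final upgrade from weak to strong convergence via a ``strictly convex Casimir'' is asserted without naming the Casimir or explaining why its value along the sequence forces norm convergence. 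In short, the strategy matches the cited reference, but the hard analytic step is acknowledged rather than executed.
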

\begin{remark}
    According to \eqref{def abso-vort}, one has
    $$\forall t\geqslant0,\quad\omega(t,\cdot)-\omega_{\star}=\zeta(t,\cdot)-\zeta_{\star}.$$
    Hence, the theorem holds replacing $\|\omega_0-\omega_{\star}\|_{L^1(\mathbb{S}^2)}$ by $\|\zeta_0-\zeta_{\star}\|_{L^1(\mathbb{S}^2)}$ and $\|\omega(t,\cdot)-\omega_{\star}\|_{L^1(\mathbb{S}^2)}$ by $\|\zeta(t,\cdot)-\zeta_{\star}\|_{L^1(\mathbb{S}^2)}$. Nevertheless, the monotone condition in the Theorem \ref{thm stab CM} hits the actual vorticity $\omega$ and that is the reason why we imposed the condition \eqref{monotone cond} in the Theorem \ref{thm filamentation}.
\end{remark}
In Lemma \ref{lem around barotropic model}, we prove that for $u=\nabla^{\perp}G[\omega],$ we have
$$\|u\|_{L^{\infty}}\lesssim\sqrt{\|\omega\|_{L^{\infty}(\mathbb{S}^2)}\|\omega\|_{L^1(\mathbb{S}^2)}}.$$
Applying this estimate to the difference $u-u_{\star}$ and exploiting the Theorem \ref{thm stab CM}, we are able to prove that
\begin{equation}\label{stab intro}
    \|\zeta_0-\zeta_{\star}\|_{L^1(\mathbb{S}^2)}\ll1\qquad\Rightarrow\qquad\sup_{0\leqslant t\leqslant T}\|u(t,\cdot)-u_{\star}\|_{L^{\infty}(\mathbb{S}^2)}\ll1.
\end{equation}
We consider $\Theta$ and $\Phi$ the co-latitude and longitude flows so that
$$\phi(t,\mathbf{x})=\psi_1\big(\Theta(t,\mathbf{x}),\Phi(t,\mathbf{x})\big).$$
The longitude is also lifted to $\mathbb{R}$ in order to follow the perimeter growth. For a zonal flow $\zeta_{\star}$, we prove that for any $\mathbf{x}=\psi_1(\theta_{\mathbf{x}},\varphi_{\mathbf{x}}),$ we have
$$\Theta_{\star}(t,\mathbf{x})=\Theta(0,\mathbf{x})\triangleq\theta_{\mathbf{x}},\qquad\Phi_{\star}(t,\mathbf{x})=\dot{\Phi}_{\star}(\theta_{\mathbf{x}})t+\varphi_{\mathbf{x}},\qquad\dot{\Phi}_{\star}(\theta)\triangleq\frac{\partial_{\theta}G[\zeta_{\star}](\theta)}{\sin(\theta)}+\gamma.$$
Using in particular \eqref{stab intro}, we show in Proposition \ref{prop confinement} that far from the poles, the approximate flow follows the zonal linear dynamics on the interval $[0,T]$
$$\Theta(t,\mathbf{x})\approx\theta_{\mathbf{x}},\qquad\textnormal{and}\qquad\Phi(t,\mathbf{x})\approx\dot{\Phi}_{\star}(\theta_{\mathbf{x}})t+\varphi_{\mathbf{x}}.$$
We take an initial condition $\zeta_0$ with interface $\Gamma(0)$ such that
$$\mathbf{x}_0,\mathbf{x}_1\in\Gamma(0),\qquad\theta_{\mathbf{x}_0}=\theta_{k_0},\qquad\theta_{\mathbf{x}_1}=\theta_{\mathbf{x}_1}(\mu)=\theta_{k_0}+\mu.$$
Using the expression of the length of a curve on the sphere and the approximate flow dynamics, we get the bound
$$\textnormal{Length}\big(\gamma_T([0,1])\big)\gtrsim\left|\Phi(T,\mathbf{x}_1)-\Phi(T,\mathbf{x}_0)\right|\approx\left|\dot{\Phi}_{\star}\big(\theta_{\mathbf{x}_1}(\mu)\big)-\dot{\Phi}_{\star}(\theta_{\mathbf{x}_0})\right|T.$$
Exploiting the explicit formulation for $\partial_{\theta}G[\zeta_{\star}]$ when $\zeta_{\star}$ is a monotone zonal vortex cap (see Lemma \ref{lem zonal}), we are able to prove in Lemma \ref{lem alpha non zero} that for $|\mu|\ll1,$ we get
$$\left|\dot{\Phi}_{\star}\big(\theta_{\mathbf{x}_1}(\mu)\big)-\dot{\Phi}_{\star}(\theta_{\mathbf{x}_0})\right|\neq0.$$
This concludes the desired result.\\

\noindent\textbf{Plan of the paper :} In Section \ref{sec properties}, we state some general results that concern our equation and that can be used in other contexts. The Section \ref{sec proof} is devoted to the proof of our main result.\\

\noindent\textbf{Acknowledgments :} The work of Emeric Roulley is supported by PRIN 2020 "Hamiltonian and Dispersive PDEs", project number: 2020XB3EFL.  The authors would like to thank Alberto Maspero for useful discussions and for making this collaboration possible.

\section{Preliminary properties}\label{sec properties}
This section gathers some general technical results for our model. These are quite general and might be useful in other contexts regarding the barotropic model.
\subsection{Generic properties of the barotropic model and analysis of zonal caps}
Here we give some basic properties of the stream function. We also discuss the particular zonal case of interest in our study.
\begin{lem}\label{lem around barotropic model}
The following properties hold true.
\begin{enumerate}[label=(\roman*)]
    \item For any $\mathbf{x}=(x_1,x_2,x_3)\in\mathbb{S}^2$ and $t\geqslant0,$ we have
    \begin{equation}\label{Gomg-Gzeta}
        G[\omega](t,\mathbf{x})=G[\zeta](t,\mathbf{x})-\gamma x_3.
    \end{equation}
    \item One has
    \begin{equation}\label{delta Gz}
        \Delta G[\zeta]=\zeta.
    \end{equation} 
\end{enumerate}
    
\end{lem}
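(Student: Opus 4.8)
The two identities amount to saying that the integral operator $\mathtt{f}\mapsto G[\mathtt{f}]$ defined in \eqref{Green kernel} is a right inverse of the Laplace--Beltrami operator on mean-free functions, together with its special value on the first spherical harmonic $x_3=\cos\theta$. The plan is to first establish, in the distributional sense on $\mathbb{S}^2$, the fundamental-solution identity
$$\Delta_{\mathbf{x}}G(\mathbf{x},\mathbf{y})=\delta_{\mathbf{y}}(\mathbf{x})-\frac{1}{4\pi}.$$
Since chordal and geodesic distance are related by $|\mathbf{x}-\mathbf{y}|_3=2\sin\!\big(d(\mathbf{x},\mathbf{y})/2\big)$, the kernel is a radial function of the geodesic distance $d$, and plugging it into the radial form of \eqref{Laplacian sphere} in geodesic polar coordinates centered at $\mathbf{y}$, namely $\tfrac{1}{\sin d}\,\partial_d(\sin d\,\partial_d\,\cdot\,)$, a short computation gives $\Delta_{\mathbf{x}}G(\mathbf{x},\mathbf{y})=-\tfrac{1}{4\pi}$ away from the diagonal. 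Near $\mathbf{x}=\mathbf{y}$ the kernel equals $\tfrac{1}{2\pi}\log|\mathbf{x}-\mathbf{y}|_3$ up to smooth terms, i.e. the planar fundamental solution, so it carries the Dirac mass $\delta_{\mathbf{y}}$ with unit weight; the additive constant is then pinned down to $1/\sigma(\mathbb{S}^2)=1/(4\pi)$ by testing against $1$ and using $\langle\Delta_{\mathbf{x}}G(\cdot,\mathbf{y}),1\rangle=\langle G(\cdot,\mathbf{y}),\Delta 1\rangle=0$.

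Item (ii) then follows by integrating this identity against $\zeta(t,\cdot)$ and exchanging $\Delta_{\mathbf{x}}$ with the $d\sigma(\mathbf{y})$-integral — legitimate thanks to the integrable logarithmic singularity, after a standard mollification and elliptic regularity — which gives $\Delta G[\zeta]=\zeta-\tfrac{1}{4\pi}\int_{\mathbb{S}^2}\zeta\,d\sigma$, and the remaining integral vanishes by the Gauss constraint \eqref{Gauss constraint}; this is \eqref{delta Gz}. The identical computation with $\omega$ in place of $\zeta$ (also mean-free by \eqref{Gauss constraint}) reproduces the third line of \eqref{barotropic model}.

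For item (i), I would use the linearity of $\mathtt{f}\mapsto G[\mathtt{f}]$, clear from \eqref{Green kernel}, together with $\omega=\zeta+2\gamma x_3$ from \eqref{def abso-vort}, so that $G[\omega]=G[\zeta]+2\gamma\,G[x_3]$; it then remains to identify $G[x_3]=-\tfrac12 x_3$. From \eqref{x3cost} and \eqref{Laplacian sphere} one computes $\Delta x_3=-2x_3$, hence $-\tfrac12 x_3$ solves $\Delta v=x_3$; by the fundamental-solution identity above and $\int_{\mathbb{S}^2}x_3\,d\sigma=0$, so does $G[x_3]$. Therefore $G[x_3]+\tfrac12 x_3$ is harmonic on the closed surface $\mathbb{S}^2$, hence constant, and the constant must be $0$ because both functions have zero average: $x_3$ by symmetry, and $G[x_3]$ because $\int_{\mathbb{S}^2}G(\mathbf{x},\mathbf{y})\,d\sigma(\mathbf{x})$ is independent of $\mathbf{y}$ (the kernel depends only on $|\mathbf{x}-\mathbf{y}|_3$ and $SO(3)$ acts transitively on $\mathbb{S}^2$), so that after Fubini $\int_{\mathbb{S}^2}G[x_3]\,d\sigma$ is a constant multiple of $\int_{\mathbb{S}^2}x_3\,d\sigma=0$. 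This yields $G[x_3]=-\tfrac12 x_3$, and hence $G[\omega]=G[\zeta]-\gamma x_3$, which is \eqref{Gomg-Gzeta}.

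The only genuinely delicate point is the first step: identifying the correct additive constant and rigorously justifying that $\Delta$ commutes with the integral against the singular kernel; everything else is elementary. A cleaner route that bypasses the potential-theoretic bookkeeping is to expand the kernel in spherical harmonics, $G(\mathbf{x},\mathbf{y})=-\sum_{n\geqslant1}\frac{1}{n(n+1)}\sum_{m=-n}^{n}Y_n^m(\mathbf{x})\overline{Y_n^m(\mathbf{y})}$: then $\Delta G[\mathtt{f}]=\mathtt{f}$ for every mean-free $\mathtt{f}\in L^2(\mathbb{S}^2)$, and since $x_3$ is proportional to $Y_1^0$ one reads off $G[x_3]=-\tfrac{1}{1\cdot 2}\,x_3=-\tfrac12 x_3$, so that both items follow at once.
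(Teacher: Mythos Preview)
Your argument is correct, but it proceeds quite differently from the paper. The paper proves (i) first by a direct ODE computation: it notes that $G[x_3]$ is zonal, writes $\Delta G[x_3]=x_3$ in spherical coordinates as $\partial_\theta(\sin\theta\,\partial_\theta G[x_3])=\tfrac12\sin(2\theta)$, integrates twice, and fixes the two constants of integration via the impermeability condition at the poles and the zero-mean normalization of the stream function. It then deduces (ii) from (i) by applying $\Delta$ to $G[\zeta]=G[\omega]+\gamma x_3$ and invoking the model equation $\Delta G[\omega]=\omega$ together with $\Delta x_3=-2x_3$. You instead go the other way: you establish the distributional identity $\Delta_{\mathbf{x}}G(\mathbf{x},\mathbf{y})=\delta_{\mathbf{y}}-\tfrac{1}{4\pi}$ to get (ii) directly for any mean-free input, and then obtain $G[x_3]=-\tfrac12 x_3$ by the eigenfunction observation $\Delta x_3=-2x_3$ plus uniqueness of harmonic functions on a closed surface, with the constant killed by your Fubini/rotation-invariance argument for $\int G[x_3]\,d\sigma=0$. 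Your route (and the spherical-harmonic shortcut you mention at the end) is more conceptual and self-contained, in that it does not lean on the third line of \eqref{barotropic model} as a given; the paper's route is entirely elementary and avoids any distributional bookkeeping or appeal to the fundamental-solution identity.
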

\begin{proof}
    $(i)$ Observe from \eqref{def abso-vort} that 
    \begin{equation}\label{interm1}
        G[\omega](t,\mathbf{x})=G[\zeta+2\gamma x_3](t,\mathbf{x})=G[\zeta](t,\mathbf{x})+2\gamma G[x_3].
    \end{equation}
    Now, since $x_3=\cos(\theta)$ is zonal, then according to \cite[Lem. 1.2]{GHR23}, $G[x_3]$ is also zonal and solves the equation $\Delta G[x_3]=x_3$ which becomes via \eqref{Laplacian sphere} and \eqref{x3cost},
    $$\frac{1}{\sin(\theta)}\partial_{\theta}\big(\sin(\theta)\partial_{\theta}G[x_3](\theta)\big)=\cos(\theta).$$
    Hence,
    $$\partial_{\theta}\big(\sin(\theta)\partial_{\theta}G[x_3](\theta)\big)=\sin(\theta)\cos(\theta)=\tfrac{1}{2}\sin(2\theta).$$
    Integrating this relation implies the existence of a constant $C\in\mathbb{R}$ such that
    $$\partial_{\theta}G[x_3](\theta)=\frac{C-\cos(2\theta)}{4\sin(\theta)}\cdot$$
    Since the flow is zonal, the impermeability condition \eqref{impermea} implies that there is no velocity at the pole, which gives
    $$\lim_{\theta\to0^+}\partial_{\theta}G[x_3](\theta)=0.$$
    As a consequence, $C=1$ and 
    $$\partial_{\theta}G[x_3](\theta)=\frac{1-\cos(2\theta)}{4\sin(\theta)}=\tfrac{1}{2}\sin(\theta).$$
    Integrating this relation gives the existence of $C'\in\mathbb{R}$ such that
    $$G[x_3](\theta)=C'-\tfrac{1}{2}\cos(\theta).$$
    We find the value of $C'$ thanks to the zero mean condition for the stream function (which is a consequence of the Gauss constraint, see \cite{CLW24}),
    $$0=\int_{\mathbb{S}^2}G[x_3]d\sigma(\mathbf{x})=4\pi C'-\pi\int_{0}^{\pi}\cos(\theta)\sin(\theta)d\theta=4\pi C'.$$
    Therefore, $C'=0$ and 
    \begin{equation}\label{Gx3}
        G[x_3](\theta)=-\tfrac{1}{2}\cos(\theta),\qquad\textnormal{i.e.}\qquad G[x_3]=-\frac{x_3}{2}\cdot
    \end{equation}
    Plugging \eqref{Gx3} into \eqref{interm1} gives the desired result.\\
    $(ii)$ The relation \eqref{Gx3} implies
    $$\Delta x_3=-2x_3.$$
    Consequently, using the point $(i)$, the third equation in \eqref{barotropic model} and \eqref{def abso-vort}, we infer that for any $\mathbf{x}=(x_1,x_2,x_3)\in\mathbb{S}^2$ and $t\geqslant0,$
    \begin{align*}
        \Delta G[\zeta](t,\mathbf{x})&=\Delta\big(G[\omega](t,\mathbf{x})+\gamma x_3\big)\\
        &=\Delta G[\omega](t,\mathbf{x})+\gamma\Delta x_3\\
        &=\omega(t,\mathbf{x})-2\gamma x_3\\
        &=\zeta(t,\mathbf{x}).
    \end{align*}
    This concludes the proof of Lemma \ref{lem around barotropic model}.
\end{proof}
Now we turn to the analysis of the stream function associated with a zonal vortex cap. The result is the following.
\begin{lem}\label{lem zonal}
        Let $N\in\mathbb{N}\setminus\{0,1\}$ and $0\triangleq\theta_0<\theta_1<\theta_2<...<\theta_{N-1}<\theta_N\triangleq\pi.$ Consider the zonal cap 
    $$\zeta_{\star}(\theta)=\omega_1\mathbf{1}_{0\leqslant\theta<\theta_1}+\omega_2\mathbf{1}_{\theta_1\leqslant\theta<\theta_2}+...+\omega_N\mathbf{1}_{\theta_{N-1}\leqslant\theta\leqslant\pi},$$
    with
    $$\sum_{k=1}^{N}\omega_k\big(\cos(\theta_k)-\cos(\theta_{k-1})\big)=0.$$
    Then, the stream function $G[\zeta_{\star}]$ is of class $C^1$ on $(0,\pi)$ and satisfies for any $k_{0}\in\llbracket 1,N\rrbracket$ and $\theta\in[\theta_{k_0-1},\theta_{k_0})\setminus\{0\},$ 
    \begin{equation}\label{deriv Gzetastar}
        \partial_{\theta}G[\zeta_{\star}](\theta)=\frac{1}{\sin(\theta)}\left(\sum_{k=1}^{k_0-1}\omega_{k}\big(\cos(\theta_{k-1})-\cos(\theta_k)\big)+\omega_{k_0}\big(\cos(\theta_{k_0-1})-\cos(\theta)\big)\right).
    \end{equation} 
\end{lem}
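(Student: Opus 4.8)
The plan is to solve the ODE that $G[\zeta_\star]$ satisfies on each interval where $\zeta_\star$ is constant, and then glue the pieces together using the regularity of the stream function. By Lemma \ref{lem around barotropic model}$(ii)$ we know $\Delta G[\zeta_\star]=\zeta_\star$, and since $\zeta_\star$ is zonal, \cite[Lem. 1.2]{GHR23} gives that $G[\zeta_\star]$ is zonal as well. Using the expression \eqref{Laplacian sphere} for the Laplacian restricted to zonal functions, the equation becomes
$$\frac{1}{\sin(\theta)}\partial_{\theta}\big(\sin(\theta)\partial_{\theta}G[\zeta_\star](\theta)\big)=\zeta_\star(\theta),$$
so that on each open interval $(\theta_{k-1},\theta_k)$ we have $\partial_\theta\big(\sin(\theta)\partial_\theta G[\zeta_\star](\theta)\big)=\omega_k\sin(\theta)$. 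Integrating once on this interval produces $\sin(\theta)\partial_\theta G[\zeta_\star](\theta)=-\omega_k\cos(\theta)+c_k$ for some constant $c_k\in\mathbb{R}$.

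Next I would pin down the constants $c_k$. The analogue of the impermeability argument used in the proof of Lemma \ref{lem around barotropic model}$(i)$ forces $\sin(\theta)\partial_\theta G[\zeta_\star](\theta)\to0$ as $\theta\to0^+$ (no velocity at the north pole), which gives $c_1=\omega_1$, hence on $(\theta_0,\theta_1)$,
$$\sin(\theta)\partial_\theta G[\zeta_\star](\theta)=\omega_1\big(1-\cos(\theta)\big)=\omega_1\big(\cos(\theta_0)-\cos(\theta)\big).$$
For the remaining constants I would use the $C^1$-regularity of $G[\zeta_\star]$ on $(0,\pi)$ — which itself follows from elliptic regularity away from the interfaces together with the logarithmic (hence continuous-derivative after one integration) nature of the Green kernel, or can be quoted from \cite{GHR23}. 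Continuity of $\partial_\theta G[\zeta_\star]$ at each $\theta_k$ means $\sin(\theta_k)\partial_\theta G[\zeta_\star](\theta_k)$ computed from the left equals the value from the right, giving the recursion $-\omega_k\cos(\theta_k)+c_k=-\omega_{k+1}\cos(\theta_k)+c_{k+1}$, i.e. $c_{k+1}-c_k=(\omega_{k+1}-\omega_k)\cos(\theta_k)$. Summing this telescoping relation from the known value $c_1=\omega_1\cos(\theta_0)$ yields, for $\theta\in[\theta_{k_0-1},\theta_{k_0})$,
$$\sin(\theta)\partial_\theta G[\zeta_\star](\theta)=\sum_{k=1}^{k_0-1}\omega_k\big(\cos(\theta_{k-1})-\cos(\theta_k)\big)+\omega_{k_0}\big(\cos(\theta_{k_0-1})-\cos(\theta)\big),$$
which is exactly \eqref{deriv Gzetastar} after dividing by $\sin(\theta)$.

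The main obstacle, and the step that deserves the most care, is justifying the $C^1$-regularity of $G[\zeta_\star]$ across the interfaces $\theta=\theta_k$ and the boundary behavior at the pole $\theta\to0^+$, since $\zeta_\star$ is only piecewise constant: one must check that no Dirac-type contribution appears in $\partial_\theta\big(\sin\theta\,\partial_\theta G[\zeta_\star]\big)$ at the jumps (it does not, because $\zeta_\star\in L^\infty$ and the Newtonian-type potential gains essentially two derivatives, so $G[\zeta_\star]\in W^{2,p}$ locally for all $p<\infty$, hence $C^1$), and that the Gauss constraint $\sum_k\omega_k(\cos\theta_k-\cos\theta_{k-1})=0$ is consistent with finiteness of $\partial_\theta G[\zeta_\star]$ as $\theta\to\pi^-$ (taking $k_0=N$ and $\theta\to\pi$ in the displayed formula, the bracket tends to $\sum_{k=1}^N\omega_k(\cos\theta_{k-1}-\cos\theta_k)=0$, so the apparent $1/\sin\theta$ singularity at the south pole is removable — a useful consistency check). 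Once these regularity points are settled, the rest is the elementary ODE integration and telescoping described above.
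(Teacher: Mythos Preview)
Your proposal is correct and follows essentially the same approach as the paper: both start from $\Delta G[\zeta_\star]=\zeta_\star$, reduce to the zonal ODE $\partial_\theta(\sin\theta\,\partial_\theta G[\zeta_\star])=\zeta_\star(\theta)\sin\theta$, integrate, and fix the integration constant via the vanishing of the velocity at the north pole. The only cosmetic difference is that the paper integrates in one shot from $0$ to $\theta$ (obtaining a single constant $C$ independent of $k_0$, then setting $C=0$), whereas you integrate piecewise, obtain constants $c_k$ on each subinterval, and recover the same formula by a telescoping recursion using continuity of $\partial_\theta G[\zeta_\star]$ at the interfaces; your added discussion of the $W^{2,p}$/$C^1$ regularity and the south-pole consistency check via the Gauss constraint makes explicit what the paper leaves implicit.
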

\begin{remark}\label{remark differentiability}
    With the expression \eqref{deriv Gzetastar}, we see that the function $\partial_{\theta}G[\zeta_{\star}]$ is continuous on $(0,\pi)$ and differentiable on $(0,\pi)\setminus\{\theta_1,\theta_2,...,\theta_{N-1}\}.$
\end{remark}
\begin{proof}
    According to \eqref{delta Gz} and \eqref{Laplacian sphere}, we have
    $$\partial_{\theta}\big(\sin(\theta)\partial_{\theta}G[\zeta_{\star}](\theta)\big)=\sin(\theta)\left(\omega_1\mathbf{1}_{0\leqslant\theta<\theta_1}+\omega_2\mathbf{1}_{\theta_1\leqslant\theta<\theta_2}+...+\omega_N\mathbf{1}_{\theta_{N-1}\leqslant\theta\leqslant\pi}\right).$$
    Assume that $\theta\in[\theta_{k_0-1},\theta_{k_0})\setminus\{0\}$ for some $k_0\in\llbracket 1, N\rrbracket.$ Then, integrating the previous relation leads to the existence of a constant $C\in\mathbb{R}$ such that
    $$\partial_{\theta}G[\zeta_{\star}](\theta)=\frac{1}{\sin(\theta)}\left(\sum_{k=1}^{k_0-1}\omega_{k}\big(\cos(\theta_{k-1})-\cos(\theta_k)\big)+\omega_{k_0}\big(\cos(\theta_{k_0-1})-\cos(\theta)\big)+C\right).$$
    The constant $C$ is independant of $k_0$ and since the flow is zonal, there is no velocity at the pole. This implies that $C=0,$ which gives the desired result.
\end{proof}
\subsection{Velocity estimates}
Now we prove a technical lemma used along the paper. In particular, we prove that if the vorticity in bounded, then the velocity field is also bounded.
\begin{lem}\label{lem convol}
The following properties hold true.
\begin{enumerate}[label=(\roman*)]
    \item Given $f\in L^{\infty}(\mathbb{S}^2)\subset L^{1}(\mathbb{S}^2),$ we have
    $$\left\|\frac{1}{|\cdot|_{\mathbb{R}^3}}\ast f\right\|_{L^{\infty}(\mathbb{S}^2)}\lesssim\sqrt{\|f\|_{L^{\infty}(\mathbb{S}^2)}\|f\|_{L^1(\mathbb{S}^2)}}.$$
    \item Given $\omega\in L^{\infty}(\mathbb{S}^2)$ and $u=\nabla^{\perp}G[\omega]$, we have
    $$\|u\|_{L^{\infty}(\mathbb{S}^2)}\lesssim\sqrt{\|\omega\|_{L^{\infty}(\mathbb{S}^2)}\|\omega\|_{L^1(\mathbb{S}^2)}}.$$
    In particular, if $\omega$ is the vorticity of a vortex cap, then
    $$\sup_{t\geqslant0}\|u(t,\cdot)\|_{L^{\infty}(\mathbb{S}^2)}\lesssim\sup_{t\geqslant0}\|\omega(t,\cdot)\|_{L^{\infty}(\mathbb{S}^2)}<\infty.$$
    \item Let $\omega_{\star}\in L^{\infty}(\mathbb{S}^2)$ be a monotone zonal solution of \eqref{barotropic model}. Then for any $\varepsilon>0,$ there exists $\delta_1\triangleq\delta_1(\varepsilon)>0$ such that for any $\omega_0\in L^{\infty}(\mathbb{S}^2)\setminus\{0\},$ with
    $$\|\omega_0-\omega_{\star}\|_{L^1(\mathbb{S}^2)}<\delta_1,$$
    then, denoting $u=\nabla^{\perp}G[\omega]$ (resp. $u_{\star}=\nabla^{\perp}G[\omega_{\star}]$) the velocity field associated with the solution $t\mapsto\omega(t,\cdot)$ (resp. $\omega_{\star}$),
    we have
    $$\sup_{t\geqslant0}\|u(t,\cdot)-u_{\star}\|_{L^{\infty}(\mathbb{S}^2)}<\varepsilon\sup_{t\geqslant0}\sqrt{\|\omega(t,\cdot)-\omega_{\star}\|_{L^{\infty}(\mathbb{S}^2)}}.$$
    In particular, this holds for $\omega_0,\omega_{\star}$ vortex caps.
\end{enumerate}
\end{lem}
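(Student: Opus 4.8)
The plan is to prove $(i)$ first --- the pointwise convolution bound is the crux --- and then deduce $(ii)$ and $(iii)$ from it quickly. For $(i)$, fix $\mathbf{x}\in\mathbb{S}^2$ and a radius $r\in(0,\pi]$ to be chosen, and split $\mathbb{S}^2$ into the geodesic cap $B_r(\mathbf{x})$ and its complement. On $B_r(\mathbf{x})$ we bound $|f|\leqslant\|f\|_{L^\infty(\mathbb{S}^2)}$ and use $|\mathbf{x}-\mathbf{y}|_{\mathbb{R}^3}=2\sin(d(\mathbf{x},\mathbf{y})/2)\geqslant\tfrac{2}{\pi}\,d(\mathbf{x},\mathbf{y})$ together with $\int_{B_r(\mathbf{x})}\tfrac{d\sigma(\mathbf{y})}{d(\mathbf{x},\mathbf{y})}=\int_0^r\tfrac{2\pi\sin\rho}{\rho}\,d\rho\lesssim r$, so this piece contributes $\lesssim r\,\|f\|_{L^\infty(\mathbb{S}^2)}$. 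On the complement $|\mathbf{x}-\mathbf{y}|_{\mathbb{R}^3}\geqslant\tfrac{2}{\pi}r$, so that piece contributes $\lesssim r^{-1}\|f\|_{L^1(\mathbb{S}^2)}$. Adding the two and optimizing in $r$, namely taking $r\simeq\sqrt{\|f\|_{L^1(\mathbb{S}^2)}/\|f\|_{L^\infty(\mathbb{S}^2)}}$ (which is automatically $\leqslant\sqrt2<\pi$ since $\|f\|_{L^1(\mathbb{S}^2)}\leqslant4\pi\|f\|_{L^\infty(\mathbb{S}^2)}$; the case $f\equiv0$ is trivial), produces $(i)$.

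\textbf{From $(i)$ to $(ii)$.} By \eqref{Green kernel}, the Biot--Savart kernel is $K(\mathbf{x},\mathbf{y})\triangleq\nabla^\perp_{\mathbf{x}}G(\mathbf{x},\mathbf{y})=\tfrac{1}{2\pi}\nabla^\perp_{\mathbf{x}}\log|\mathbf{x}-\mathbf{y}|_{\mathbb{R}^3}$. Since $\nabla^\perp$ has the same length as $\nabla$ and the intrinsic gradient on $\mathbb{S}^2$ is the tangential projection of the ambient one, $|K(\mathbf{x},\mathbf{y})|\leqslant\tfrac{1}{2\pi}\big|\nabla^{\mathbb{R}^3}_{\mathbf{x}}\log|\mathbf{x}-\mathbf{y}|_{\mathbb{R}^3}\big|=\tfrac{1}{2\pi\,|\mathbf{x}-\mathbf{y}|_{\mathbb{R}^3}}$, which is locally integrable in $\mathbf{y}$ on the $2$-sphere; hence $G[\omega]$ may be differentiated under the integral and $|u(\mathbf{x})|\leqslant\tfrac{1}{2\pi}\big(\tfrac{1}{|\cdot|_{\mathbb{R}^3}}\ast|\omega|\big)(\mathbf{x})$, so applying $(i)$ with $f=|\omega|$ gives the bound. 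For a vortex cap, $\zeta$ is transported by the measure-preserving flow, so $\|\zeta(t,\cdot)\|_{L^\infty(\mathbb{S}^2)}=\|\zeta_0\|_{L^\infty(\mathbb{S}^2)}$ and thus $\|\omega(t,\cdot)\|_{L^\infty(\mathbb{S}^2)}\leqslant\|\zeta_0\|_{L^\infty(\mathbb{S}^2)}+2|\gamma|$; combined with $\|\omega(t,\cdot)\|_{L^1(\mathbb{S}^2)}\leqslant4\pi\|\omega(t,\cdot)\|_{L^\infty(\mathbb{S}^2)}$ this yields the uniform-in-time estimate.

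\textbf{From $(ii)$ to $(iii)$.} The map $\omega\mapsto G[\omega]$ in \eqref{Green kernel} is linear, so $u(t,\cdot)-u_\star=\nabla^\perp G[\omega(t,\cdot)-\omega_\star]$ with $\omega(t,\cdot)-\omega_\star\in L^\infty(\mathbb{S}^2)$; applying $(ii)$ to this difference gives, for an absolute constant $C$,
$$
\|u(t,\cdot)-u_\star\|_{L^\infty(\mathbb{S}^2)}\leqslant C\sqrt{\|\omega(t,\cdot)-\omega_\star\|_{L^\infty(\mathbb{S}^2)}}\,\sqrt{\|\omega(t,\cdot)-\omega_\star\|_{L^1(\mathbb{S}^2)}}.
$$
Taking $\sup_{t\geqslant0}$ and splitting the two factors, it suffices to force $\sup_{t\geqslant0}\|\omega(t,\cdot)-\omega_\star\|_{L^1(\mathbb{S}^2)}<(\varepsilon/C)^2$. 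Since $\omega_\star,\omega_0\in L^\infty(\mathbb{S}^2)\subset L^p(\mathbb{S}^2)$ for $p>2$, Theorem \ref{thm stab CM} --- which applies to $\omega$, equivalently to $\zeta$, by the remark following it --- supplies exactly such a $\delta_1\triangleq\delta\big((\varepsilon/C)^2\big)>0$; and for vortex caps both sides are finite because $\omega(t,\cdot)-\omega_\star=\zeta(t,\cdot)-\zeta_\star$ is bounded uniformly in time. This closes the proof.

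\textbf{Expected main obstacle.} The only slightly delicate point is the representation, in the second step, of $u=\nabla^\perp G[\omega]$ as the singular integral against $K$, i.e.\ differentiation under the integral past the logarithmic singularity; but this is standard potential theory once the pointwise bound $|K(\mathbf{x},\mathbf{y})|\lesssim|\mathbf{x}-\mathbf{y}|_{\mathbb{R}^3}^{-1}$ and its local integrability on $\mathbb{S}^2$ are recorded. Everything else is the elementary splitting/optimization of the first step and a direct invocation of Theorem \ref{thm stab CM} together with linearity of $G$.
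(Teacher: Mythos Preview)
Your proof is correct and follows exactly the paper's strategy: the near/far geodesic-ball splitting with optimization in $r$ for $(i)$, the pointwise kernel bound $|\nabla^\perp_{\mathbf{x}}G(\mathbf{x},\mathbf{y})|\lesssim|\mathbf{x}-\mathbf{y}|_{\mathbb{R}^3}^{-1}$ for $(ii)$, and linearity of $G$ together with Theorem~\ref{thm stab CM} applied with $\delta_1=\delta(\varepsilon^2/C^2)$ for $(iii)$. One harmless arithmetic slip: $\|f\|_{L^1}\leqslant4\pi\|f\|_{L^\infty}$ yields $r\leqslant\sqrt{4\pi}\approx3.54$, not $\sqrt{2}$, so the optimal $r$ may exceed $\pi$; you can simply cap $r$ at $\pi$ (the paper sidesteps this by optimizing in $\sin(r/2)\in[0,1]$ instead).
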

\begin{proof} $(i)$ We assume $f\neq0$ otherwise the result is trivial. Recall that, denoting $\mathtt{d}_{\mathbb{S}^2}(\mathbf{x},\mathbf{y})$ the geodesic distance between $\mathbf{x}\in\mathbb{S}^2$ and $\mathbf{y}\in\mathbb{S}^2,$ we have
    $$|\mathbf{x}-\mathbf{y}|_{\mathbb{R}^3}=2\sin\left(\frac{\mathtt{d}_{\mathbb{S}^2}(\mathbf{x},\mathbf{y})}{2}\right).$$
    Given $\mathbf{x}\in\mathbb{S}^2$ and $r>0$, we denote
    $$B_{\mathbb{S}^2}(\mathbf{x},r)\triangleq\left\lbrace\mathbf{y}\in\mathbb{S}^2\quad\textnormal{s.t.}\quad\mathtt{d}_{\mathbb{S}^2}(\mathbf{x},\mathbf{y})<r\right\rbrace.$$
    Then, for any $\mathbf{x}\in\mathbb{S}^2$ and any arbitrary $r\in(0,\pi]$, we can bound
    \begin{align*}
        \left|\left(\frac{1}{|\cdot|_{\mathbb{R}^3}}\ast f\right)(\mathbf{x})\right|&=\int_{\mathbb{S}^2}\frac{f(\mathbf{y})}{|\mathbf{x}-\mathbf{y}|_{\mathbb{R}^3}}d\sigma(\mathbf{y})\\
        &\leqslant\int_{B_{\mathbb{S}^2}(\mathbf{x},r)}\frac{|f(\mathbf{y})|}{|\mathbf{x}-\mathbf{y}|_{\mathbb{R}^3}}d\sigma(\mathbf{y})+\int_{\mathbb{S}^2\setminus B_{\mathbb{S}^2}(\mathbf{x},r)}\frac{|f(\mathbf{y})|}{|\mathbf{x}-\mathbf{y}|_{\mathbb{R}^3}}d\sigma(\mathbf{y})\\
        &\leqslant\int_{B_{\mathbb{S}^2}(N,r)}\frac{|f\circ\mathcal{R}_{\mathbf{x}}(\mathbf{y})|}{|N-\mathbf{y}|_{\mathbb{R}^3}}d\sigma(\mathbf{y})+\int_{\mathbb{S}^2\setminus B_{\mathbb{S}^2}(N,r)}\frac{|f\circ\mathcal{R}_{\mathbf{x}}(\mathbf{y})|}{|N-\mathbf{y}|_{\mathbb{R}^3}}d\sigma(\mathbf{y}),
    \end{align*}
    where $\mathcal{R}_{\mathbf{x}}\in SO_3(\mathbb{R})$ is a rotation sending the north pole $N$ onto the point $\mathbf{x}$. A direct bound gives
    $$\int_{\mathbb{S}^2\setminus B_{\mathbb{S}^2}(N,r)}\frac{|f\circ\mathcal{R}_{\mathbf{x}}(\mathbf{y})|}{|N-\mathbf{y}|_{\mathbb{R}^3}}d\sigma(\mathbf{y})\leqslant\frac{1}{2\sin\left(\frac{r}{2}\right)}\|f\circ\mathcal{R}_{\mathbf{x}}\|_{L^1(\mathbb{S}^2)}=\frac{1}{2\sin\left(\frac{r}{2}\right)}\|f\|_{L^1(\mathbb{S}^2)}.$$
    Then, passing to spherical coordinates, we get
    \begin{align*}
        \int_{B_{\mathbb{S}^2}(N,r)}\frac{|f\circ\mathcal{R}_{\mathbf{x}}(\mathbf{y})|}{|N-\mathbf{y}|_{\mathbb{R}^3}}d\sigma(\mathbf{y})&\leqslant\|f\circ\mathcal{R}_{\mathbf{x}}\|_{L^{\infty}(\mathbb{S}^2)}\int_{0}^{2\pi}\int_{0}^{r}\frac{\sin(\theta)}{2\sin\left(\frac{\theta}{2}\right)}d\theta d\varphi\\
        &=2\pi\|f\|_{L^{\infty}(\mathbb{S}^2)}\int_{0}^{r}\cos\left(\tfrac{\theta}{2}\right)d\theta\\
        &=4\pi\sin\left(\tfrac{r}{2}\right)\|f\|_{L^{\infty}(\mathbb{S}^2)}.
    \end{align*}
    We deduce that
    $$\left|\left(\frac{1}{|\cdot|_{\mathbb{R}^3}}\ast f\right)(\mathbf{x})\right|\leqslant4\pi\sin\left(\tfrac{r}{2}\right)\|f\|_{L^{\infty}(\mathbb{S}^2)}+\frac{1}{2\sin\left(\frac{r}{2}\right)}\|f\|_{L^1(\mathbb{S}^2)}.$$
    Choosing 
    $$r\triangleq2\arcsin\left(\sqrt{\frac{\|f\|_{L^1(\mathbb{S}^2)}}{4\pi\|f\|_{L^{\infty}(\mathbb{S}^2)}}}\right),$$
    we obtain the desired bound.\\
    $(ii)$ $\blacktriangleright$ Fix $\mathbf{x}\in\mathbb{S}^2.$ According to \eqref{Green kernel} and \eqref{Gauss constraint}, we can write
    \begin{align*}
        u(\mathbf{x})&=\frac{1}{4\pi}\nabla^{\perp}\left(\int_{\mathbb{S}^2}\log\left(|\mathbf{x}-\mathbf{y}|_{\mathbb{R}^3}^2\right)\omega(\mathbf{y})d\sigma(\mathbf{y})\right)\\
        &=\frac{1}{2\pi}\int_{\mathbb{S}^2}\frac{\langle\mathbf{x}-\mathbf{y},\nabla^{\perp}\mathbf{x}\rangle_{\mathbb{R}^3}}{|\mathbf{x}-\mathbf{y}|_{\mathbb{R}^3}^2}\omega(\mathbf{y})d\sigma(\mathbf{y}).
    \end{align*}
    By Cauchy-Schwarz inequality, we obtain
    $$|u(\mathbf{x})|\lesssim\left(\frac{1}{|\cdot|_{\mathbb{R}^3}}\ast|\omega|\right)(\mathbf{x}).$$
    Applying the point $(i)$ allows to get he desired result.\\
    $\blacktriangleright$ Now we assume that $\omega$ is associated to a vortex cap in the form
    $$\omega(t,\mathbf{x})=\sum_{k=1}^{N}\omega_k\mathbf{1}_{A_k(t)}(\mathbf{x})+2\gamma x_3.$$
    In particular,
    $$\sup_{t\geqslant0}\|\omega(t,\cdot)\|_{L^{\infty}(\mathbb{S}^2)}\leqslant2|\gamma|+\sum_{k=1}^{N}|\omega_k|.$$
    Recalling that $\|\cdot\|_{L^1(\mathbb{S}^2)}\leqslant4\pi\|\cdot\|_{L^{\infty}(\mathbb{S}^2)},$ we get
    $$\sup_{t\geqslant0}\|u(t,\cdot)\|_{L^{\infty}(\mathbb{S}^2)}\lesssim\sup_{t\geqslant0}\|\omega(t,\cdot)\|_{L^{\infty}(\mathbb{S}^2)}<\infty.$$
    $(iii)$ Observe that
    $$u-u_{\star}=\nabla^{\perp}G[\omega]-\nabla^{\perp}G[\omega_{\star}]=\nabla^{\perp}G[\omega-\omega_{\star}].$$
    Fix $\varepsilon>0.$ In view of the first point, there exists a universal constant $C>0$ such that for any $t\geqslant0,$
    \begin{align*}
        \|u(t,\cdot)-u_{\star}\|_{L^{\infty}(\mathbb{S}^2)}&\leqslant C\sqrt{\|\omega(t,\cdot)-\omega_{\star}\|_{L^{\infty}(\mathbb{S}^2)}\|\omega(t,\cdot)-\omega_{\star}\|_{L^{1}(\mathbb{S}^2)}}.
    \end{align*}
    We apply the stability Theorem \ref{thm stab CM} with
    $$\delta_1(\varepsilon)=\delta\left(\frac{\varepsilon^2}{C^2}\right).$$
    This concludes the proof of Lemma \ref{lem convol}.
\end{proof}

\section{Proof of the filamentation}\label{sec proof}
This section is devoted to the proof of our main Theorem \ref{thm filamentation}. We first describe the general time evolution of the co-latitude and longitude components of the flow map. As an application, we study the case of zonal flows where the evolution is linear in time in the longitude variable while remaining at a fixed co-latitude. Then, we provide the approximate dynamics of an approximate zonal vortex cap flow. Finally, we apply this later fact to prove our main result.
\subsection{Co-latitudinal and longitudinal evolutions}
Recall that $\phi$ denotes the flow map associated with the velocity field $u$, namely
$$\forall t\geqslant0,\quad\forall\mathbf{x}\in\mathbb{S}^2,\quad\partial_t\phi(t,\mathbf{x})=u\big(t,\phi(t,\mathbf{x})\big)\qquad\textnormal{and}\qquad\phi(0,\mathbf{x})=\mathbf{x}.$$
We denote $\pi_{\theta}$ and $\widetilde{\pi}_{\varphi}$ the co-latitudinal and lifted longitudinal projections defined by
$$\pi_{\theta}(\theta,\varphi)=\theta\in(0,\pi)\qquad\textnormal{and}\qquad\widetilde{\pi}_{\varphi}(\theta,\varphi)=\varphi\in\mathbb{R}.$$
Then, we defined the co-latitude and longitude flows by
$$\Theta(t,\mathbf{x})\triangleq\pi_{\theta}\circ\psi_1^{-1}\big(\phi(t,\mathbf{x})\big)\qquad\textnormal{and}\qquad\Phi(t,\mathbf{x})\triangleq\widetilde{\pi}_{\varphi}\circ\psi_1^{-1}\big(\phi(t,\mathbf{x})\big),$$
so that
\begin{equation}\label{flow in local chart}
    \phi(t,\mathbf{x})=\psi_1\big(\Theta(t,\mathbf{x}),\Phi(t,\mathbf{x})\big)=\begin{pmatrix}
    \sin\big(\Theta(t,\mathbf{x})\big)\cos\big(\Phi(t,\mathbf{x})\big)\\
    \sin\big(\Theta(t,\mathbf{x})\big)\sin\big(\Phi(t,\mathbf{x})\big)\\
    \cos\big(\Theta(t,\mathbf{x})\big)
\end{pmatrix}.
\end{equation}
Let us mention that such definitions extend globally with the local chart $\psi_2$.
\subsubsection{General discussion}
In the next proposition, we give the generic time evolution of the co-latitude and longitude flows.
\begin{prop}\label{prop dtTP}
For any $\mathbf{x}\in\mathbb{S}^2$ and $t\geqslant0$ such that $\sin\big(\Theta(t,\mathbf{x})\big)\neq0,$ we have
    $$\partial_{t}\Theta(t,\mathbf{x})=-\frac{\partial_{\varphi}G[\omega]\big(t,\phi(t,\mathbf{x})\big)}{\sin\big(\Theta(t,\mathbf{x})\big)}=u_{\theta}\big(t,\phi(t,\mathbf{x})\big)\qquad\textnormal{and}\qquad\partial_{t}\Phi(t,\mathbf{x})=\frac{\partial_{\theta}G[\omega]\big(\phi(t,\mathbf{x})\big)}{\sin\big(\Theta(t,\mathbf{x})\big)}=\frac{u_{\varphi}\big(t,\phi(t,\mathbf{x})\big)}{\sin\big(\Theta(t,\mathbf{x})\big)}\cdot$$
\end{prop}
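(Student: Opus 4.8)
The plan is to differentiate the defining identity $\phi(t,\mathbf{x})=\psi_1\big(\Theta(t,\mathbf{x}),\Phi(t,\mathbf{x})\big)$ in time and extract the two scalar equations by projecting onto the moving orthonormal frame $\big(\mathbf{e}_{\theta}(\phi),\mathbf{e}_{\varphi}(\phi)\big)$. Before doing so, I would record the regularity facts that make the statement meaningful: on the open set where $\sin(\Theta)\neq 0$ the chart $\psi_1$ is a local diffeomorphism, so $\psi_1^{-1}$ is smooth there and the co-latitude and (continuously lifted) longitude flows $\Theta(\cdot,\mathbf{x})$, $\Phi(\cdot,\mathbf{x})$ inherit the regularity of $t\mapsto\phi(t,\mathbf{x})$. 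For a vortex cap the velocity field $u$ is bounded by Lemma \ref{lem convol} and jointly continuous in $(t,\mathbf{x})$, so $\partial_t\phi(t,\mathbf{x})=u\big(t,\phi(t,\mathbf{x})\big)$ shows $t\mapsto\phi(t,\mathbf{x})$ is $C^1$, hence so are $\Theta(\cdot,\mathbf{x})$ and $\Phi(\cdot,\mathbf{x})$ on any time interval along which the trajectory stays away from the poles.

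For the core computation I would use $\mathbf{e}_{\theta}=\partial_{\theta}\psi_1$ and $\partial_{\varphi}\psi_1=\sin(\theta)\,\mathbf{e}_{\varphi}$. The chain rule applied to $\phi=\psi_1(\Theta,\Phi)$ then gives
$$\partial_t\phi(t,\mathbf{x})=\partial_t\Theta(t,\mathbf{x})\,\mathbf{e}_{\theta}\big(\phi(t,\mathbf{x})\big)+\sin\big(\Theta(t,\mathbf{x})\big)\,\partial_t\Phi(t,\mathbf{x})\,\mathbf{e}_{\varphi}\big(\phi(t,\mathbf{x})\big).$$
On the other hand, the flow equation together with the decomposition $u=(u_{\theta},u_{\varphi})_{(\mathbf{e}_{\theta},\mathbf{e}_{\varphi})}$ yields $\partial_t\phi=u(t,\phi)=u_{\theta}(t,\phi)\,\mathbf{e}_{\theta}(\phi)+u_{\varphi}(t,\phi)\,\mathbf{e}_{\varphi}(\phi)$. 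Since $\big(\mathbf{e}_{\theta},\mathbf{e}_{\varphi}\big)$ is orthonormal at each point of $\mathbb{S}^2\setminus\{N,S\}$, identifying components gives $\partial_t\Theta=u_{\theta}(t,\phi)$ and $\sin(\Theta)\,\partial_t\Phi=u_{\varphi}(t,\phi)$, whence $\partial_t\Phi=u_{\varphi}(t,\phi)/\sin(\Theta)$ under the standing hypothesis $\sin(\Theta)\neq0$.

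Finally I would substitute $u=\nabla^{\perp}G[\omega]$, i.e. $u_{\theta}=-\tfrac{1}{\sin\theta}\partial_{\varphi}G[\omega]$ and $u_{\varphi}=\partial_{\theta}G[\omega]$, evaluated at $\phi(t,\mathbf{x})$, which produces exactly the two claimed formulas. I do not expect a genuine obstacle: the only delicate points are (a) the continuous lifting of the longitude to $\mathbb{R}$, which is licit precisely because the trajectory avoids the poles where $\mathbf{e}_{\varphi}$ degenerates, and (b) the $C^1$-in-time regularity of $\phi$, which follows from the continuity of the (bounded) vortex-cap velocity field. If one prefers to bypass the frame language, the identical conclusion follows by differentiating the explicit coordinates in \eqref{flow in local chart} componentwise and solving the resulting invertible $2\times2$ linear system for $\big(\partial_t\Theta,\partial_t\Phi\big)$.
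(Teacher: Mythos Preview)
Your argument is correct and follows essentially the same route as the paper: differentiate $\phi=\psi_1(\Theta,\Phi)$ in time, express both $\partial_t\phi$ and $u=\nabla^{\perp}G[\omega]$ in the orthonormal frame $(\mathbf{e}_{\theta},\mathbf{e}_{\varphi})$ at $\phi(t,\mathbf{x})$, and identify components. The only difference is that you add some regularity remarks, which the paper leaves implicit.
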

\begin{proof}
    The elements $\mathbf{e}_{\theta},\mathbf{e}_{\varphi}$ can be written as vector in $\mathbb{R}^3$ as follows: if $\mathbf{x}=\psi_1(\theta,\varphi),$ then
    $$\mathbf{e}_{\theta}(\mathbf{x})=\partial_{\theta}\psi_1(\theta,\varphi)=\begin{pmatrix}
        \cos(\theta)\cos(\varphi)\\
        \cos(\theta)\sin(\varphi)\\
        -\sin(\theta)
    \end{pmatrix}\qquad\textnormal{and}\qquad\mathbf{e}_{\varphi}(\mathbf{x})=\frac{1}{\sin(\theta)}\partial_{\varphi}\psi_1(\theta,\varphi)=\begin{pmatrix}
        -\sin(\varphi)\\
        \cos(\varphi)\\
        0
    \end{pmatrix}.$$
    Differentiating in time \eqref{flow in local chart}, we get
    \begin{equation}\label{dtphi in basis}
        \begin{aligned}
        u\big(t,\phi(t,\mathbf{x})\big)&=\partial_t\phi(t,\mathbf{x})\\
        &=\partial_{t}\Theta(t,\mathbf{x})\partial_{\theta}\psi_1\big(\Theta(t,\mathbf{x}),\Phi(t,\mathbf{x})\big)+\partial_{t}\Phi(t,\mathbf{x})\partial_{\varphi}\psi_1\big(\Theta(t,\mathbf{x}),\Phi(t,\mathbf{x})\big)\\
    &=\partial_{t}\Theta(t,\mathbf{x})\mathbf{e}_{\theta}\big(\phi(t,\mathbf{x})\big)+\sin\big(\Theta(t,\mathbf{x})\big)\partial_{t}\Phi(t,\mathbf{x})\mathbf{e}_{\varphi}\big(\phi(t,\mathbf{x})\big).
    \end{aligned}
    \end{equation}
    Besides,
    \begin{equation}\label{u in basis}
        \begin{aligned}
        u\big(t,\phi(t,\mathbf{x})\big)&=\nabla^{\perp}G[\omega]\big(t,\phi(t,\mathbf{x})\big)\\
        &=-\frac{\partial_{\varphi}G[\omega]\big(t,\phi(t,\mathbf{x})\big)}{\sin\big(\Theta(t,\mathbf{x})\big)}\mathbf{e}_{\theta}\big(\phi(t,\mathbf{x})\big)+\partial_{\theta}G[\omega]\big(t,\phi(t,\mathbf{x})\big)\mathbf{e}_{\varphi}\big(\phi(t,\mathbf{x})\big).
    \end{aligned}
    \end{equation}
    Comparing \eqref{dtphi in basis} and \eqref{u in basis} gives the desired result.
\end{proof}
In the particular case of a zonal flow $\omega_{\star}=\omega_{\star}(\theta)$, one has according to \cite[Lem. 1.2]{GHR23} that the stream function is longitude-independant $G[\omega_{\star}]=G[\omega_{\star}](\theta).$ Consequently, Proposition \ref{prop dtTP} implies that for any $\mathbf{x}\in\mathbb{S}^2$ and any $t\geqslant0,$
$$\partial_{t}\Theta_{\star}(t,\mathbf{x})=0,\qquad\textnormal{i.e.}\qquad\Theta_{\star}(t,\mathbf{x})=\theta_{\mathbf{x}}\triangleq\pi_{\theta}\circ\psi_1^{-1}(\mathbf{x}).$$
The motion is only longitudinal, namely
$$\phi_{\star}(t,\mathbf{x})\in\{\psi_1(\theta_{\mathbf{x}},\varphi),\quad\varphi\in(0,2\pi)\}.$$
Moreover, the previous remark together with Proposition \ref{prop dtTP} give also
$$\partial_{t}\Phi_{\star}(t,\mathbf{x})=\frac{\partial_{\theta}G[\omega_{\star}](\theta_{\mathbf{x}})}{\sin(\theta_{\mathbf{x}})}\triangleq\dot{\Phi}_{\star}(\theta_{\mathbf{x}}),\qquad\textnormal{i.e.}\qquad\Phi_{\star}(t,\mathbf{x})=\dot{\Phi}_{\star}(\theta_{\mathbf{x}})t+\varphi_{\mathbf{x}},\qquad\varphi_{\mathbf{x}}\triangleq\widetilde{\pi}_{\varphi}\circ\psi_{1}^{-1}(\mathbf{x}).$$
So the motion is longitudinal and grows linearly in that direction. Differentiating \eqref{Gomg-Gzeta} and using \eqref{x3cost} leads to
$$\partial_{\theta}G[\omega_{\star}](\theta)=\partial_{\theta}G[\zeta_{\star}](\theta)+\gamma\sin(\theta).$$
Therefore,
\begin{equation}\label{phidot-dG}
    \dot{\Phi}_{\star}(\theta)=\frac{\partial_{\theta}G[\zeta_{\star}](\theta)}{\sin(\theta)}+\gamma.
\end{equation}
\subsubsection{Flow confinement for vortex cap solutions}
Our purpose here is to describe the approximate dynamics of the flow associated to a vortex cap $L^1$-near a monotone zonal cap. The result reads as follows.
\begin{prop}\label{prop confinement}
    Consider $\zeta_{\star}$ a monotone zonal vortex cap solution of \eqref{barotropic model} as stated in Theorem \ref{thm filamentation}. Let $\mathtt{M}\geqslant1,$ $T_0>0$ and $0<\theta_{\min}<\theta_{\max}<\pi.$ There exists $\xi_0\triangleq\xi_0(T_0,\theta_{\min},\theta_{\max})>0$ such that for any $T\geqslant T_0$ and any $0<\xi\leqslant\xi_0,$ there exists $\delta_2\triangleq\delta_2(\xi,\mathtt{M},T)>0$ such that for any vortex cap solution $t\mapsto\zeta(t,\mathbf{x})$ of \eqref{barotropic model} with initial condition $\zeta_0$ satisfying
    \begin{equation}\label{small init}
        \|\zeta_0-\zeta_{\star}\|_{L^1(\mathbb{S}^2)}<\delta_2\qquad\textnormal{and}\qquad\|\zeta_0\|_{L^{\infty}(\mathbb{S}^2)}\leqslant\mathtt{M},
    \end{equation}
    then, for any $\mathbf{x}=\psi_1(\theta_{\mathbf{x}},\varphi_{\mathbf{x}})\in\mathbb{S}^2$ with
    \begin{equation}\label{col restri}
        \theta_{\min}\leqslant\theta_{\mathbf{x}}\leqslant\theta_{\max},
    \end{equation}
    we have
    $$\sup_{0\leqslant t\leqslant T}\left|\Theta(t,\mathbf{x})-\theta_{\mathbf{x}}\right|\leqslant\xi\qquad\textnormal{and}\qquad\sup_{0\leqslant t\leqslant T}\left|\Phi(t,\mathbf{x})-\big(\dot{\Phi}_{\star}(\theta_{\mathbf{x}})t+\varphi_{\mathbf{x}}\big)\right|\leqslant\frac{\xi}{\min^2\big(\sin(\theta_{\min}),\sin(\theta_{\max})\big)}\cdot$$
\end{prop}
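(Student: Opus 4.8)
The plan is to control the co-latitude flow first, then bootstrap this control to the longitude flow, using as the sole input the velocity stability estimate \eqref{stab intro} (equivalently Lemma \ref{lem convol}$(iii)$) together with the explicit zonal dynamics from the preceding subsection. First I would fix the target $\xi>0$ and choose the threshold $\xi_0=\xi_0(T_0,\theta_{\min},\theta_{\max})$ small enough that a co-latitude excursion of size $\xi_0$ starting in $[\theta_{\min},\theta_{\max}]$ never reaches the poles: concretely one can take $\xi_0<\tfrac{1}{2}\min(\theta_{\min},\pi-\theta_{\max})$, so that whenever $|\Theta(t,\mathbf x)-\theta_{\mathbf x}|\leqslant\xi_0$ one has $\sin\big(\Theta(t,\mathbf x)\big)\geqslant\tfrac12\min(\sin\theta_{\min},\sin\theta_{\max})>0$ and Proposition \ref{prop dtTP} applies. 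Then, given $T\geqslant T_0$ and $0<\xi\leqslant\xi_0$, I would pick $\varepsilon>0$ to be fixed below and let $\delta_2=\delta_2(\xi,\mathtt M,T)$ be the $\delta_1(\varepsilon)$ provided by Lemma \ref{lem convol}$(iii)$ (note $\delta_1$ depends only on $\varepsilon$, hence only on $\xi,\mathtt M,T$ once $\varepsilon$ is chosen in terms of these); the hypothesis \eqref{small init} then gives $\sup_{t\geqslant0}\|u(t,\cdot)-u_\star\|_{L^\infty}<\varepsilon\,C(\mathtt M)$ for a constant depending on $\mathtt M$ through the uniform vorticity bound $\|\omega(t,\cdot)\|_{L^\infty}\lesssim \mathtt M+|\gamma|+\sum|\omega_k|$ of Lemma \ref{lem convol}$(ii)$.

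For the co-latitude, I would run a continuity/bootstrap argument on the time interval. Since $\Theta_\star(t,\mathbf x)=\theta_{\mathbf x}$ is constant, Proposition \ref{prop dtTP} gives $\partial_t\Theta(t,\mathbf x)=u_\theta\big(t,\phi(t,\mathbf x)\big)$, and because the zonal velocity $u_\star$ has vanishing $\theta$-component, this equals $(u-u_\star)_\theta\big(t,\phi(t,\mathbf x)\big)$, which is bounded in absolute value by $\|u(t,\cdot)-u_\star\|_{L^\infty}<\varepsilon\,C(\mathtt M)$. Integrating in time, $|\Theta(t,\mathbf x)-\theta_{\mathbf x}|\leqslant t\,\varepsilon\,C(\mathtt M)\leqslant T\varepsilon\,C(\mathtt M)$ as long as the solution has not left the region where $\sin\Theta$ is bounded below; choosing $\varepsilon$ so that $T\varepsilon\,C(\mathtt M)\leqslant\xi\;(\leqslant\xi_0)$ closes the bootstrap by the standard continuity argument, and yields the first claimed bound $\sup_{0\leqslant t\leqslant T}|\Theta(t,\mathbf x)-\theta_{\mathbf x}|\leqslant\xi$.

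With the co-latitude pinned, I would treat the longitude by comparing $\partial_t\Phi(t,\mathbf x)=\dfrac{\partial_\theta G[\omega]\big(t,\phi(t,\mathbf x)\big)}{\sin\big(\Theta(t,\mathbf x)\big)}=\dfrac{u_\varphi\big(t,\phi(t,\mathbf x)\big)}{\sin\big(\Theta(t,\mathbf x)\big)}$ with $\dot\Phi_\star(\theta_{\mathbf x})=\dfrac{u_{\star,\varphi}(\theta_{\mathbf x})}{\sin(\theta_{\mathbf x})}$. The difference splits into two pieces: $\dfrac{u_\varphi(t,\phi(t,\mathbf x))-u_{\star,\varphi}(\phi(t,\mathbf x))}{\sin\Theta(t,\mathbf x)}$, bounded by $\dfrac{\varepsilon\,C(\mathtt M)}{\tfrac12\min(\sin\theta_{\min},\sin\theta_{\max})}$ using the velocity estimate and the lower bound on $\sin\Theta$ from the previous step; and $u_{\star,\varphi}\dfrac{1}{\sin\Theta(t,\mathbf x)}-u_{\star,\varphi}\dfrac1{\sin\theta_{\mathbf x}}$ plus the variation of $u_{\star,\varphi}$ itself between co-latitudes $\Theta(t,\mathbf x)$ and $\theta_{\mathbf x}$, which is $O(\xi)$ since $\dot\Phi_\star$ is Lipschitz on the compact co-latitude window $[\theta_{\min}-\xi_0,\theta_{\max}+\xi_0]\subset(0,\pi)$ (its regularity is exactly Remark \ref{remark differentiability}, or more simply $\partial_\theta G[\omega_\star]/\sin\theta$ is $C^1$ away from the interfaces and continuous across them, hence Lipschitz on the window). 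Integrating over $[0,t]\subset[0,T]$ and again choosing $\varepsilon$ small relative to $\xi$ (now also absorbing a factor $T$ and the constant $1/\min^2(\sin\theta_{\min},\sin\theta_{\max})$) produces $\sup_{0\leqslant t\leqslant T}\big|\Phi(t,\mathbf x)-(\dot\Phi_\star(\theta_{\mathbf x})t+\varphi_{\mathbf x})\big|\leqslant\dfrac{\xi}{\min^2(\sin\theta_{\min},\sin\theta_{\max})}$, as claimed. I expect the main technical nuisance to be the bookkeeping of how $\varepsilon$ (hence $\delta_2$) must be taken proportional to $\xi/T$ after multiplication by the $\mathtt M$-dependent velocity constant and the $1/\sin^2$ geometric factor — i.e. making all the "choose $\varepsilon$ small enough" steps mutually consistent in a single pass — whereas the analytic content (Lipschitz regularity of $\dot\Phi_\star$ away from the poles, uniform boundedness of the velocity) is already packaged in the lemmas of Section \ref{sec properties}.
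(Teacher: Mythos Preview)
Your approach is essentially the same as the paper's: control $\Theta$ first via $(u_\star)_\theta\equiv 0$ and the velocity stability of Lemma~\ref{lem convol}$(iii)$, then split the longitude error into a velocity-difference piece and a piece coming from the variation of $\dot\Phi_\star$ between $\Theta(t,\mathbf x)$ and $\theta_{\mathbf x}$ (the paper splits into three terms $\mathcal D_1,\mathcal D_2,\mathcal D_3$, but your two-term split is equivalent since $\mathcal D_1+\mathcal D_3$ is exactly the Lipschitz variation of $\dot\Phi_\star$).

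One point to tighten: as written, you bound the colatitude excursion by $\xi$ and then say the Lipschitz piece of $\partial_t\Phi-\dot\Phi_\star$ is $O(\xi)$; integrating this over $[0,T]$ yields $O(\xi T)$, not $O(\xi)$, which does not close. The fix---which the paper implements and your final paragraph gestures at---is to remember that the colatitude bound is really $T\varepsilon\,C(\mathtt M)$, so the Lipschitz piece contributes $O(T^2\varepsilon\,C(\mathtt M))$ after integration, and one must choose $\varepsilon$ of order $\xi/T^2$ (not just $\xi/T$) to absorb it. With that adjustment the argument is complete and matches the paper's.
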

\begin{remark}
\begin{enumerate}
    \item The previous proposition states that the time evolution up to $T$ of the point starting at $\mathbf{x}$ is confined in a $\xi$-strip around the parallel $\{\theta=\theta_{\mathbf{x}}\}.$ Moreover, the longitude evolution follows the linear time growth given by the zonal solution.
    \item The co-latitude restriction \eqref{col restri} is useful to stay far from the poles and handle the division by $\sin$ in the longitudinal flow evolution, cf. Proposition \ref{prop dtTP}. Later we will apply this result to points far from the pole so it is not so much restrictive for our analysis. The numbers $\theta_{\min}$ and $\theta_{\max}$ are arbitrary and will be chosen in the next subsection near the poles. As we will see later in the proof (see \eqref{choice xi0}, $\xi_0$ is very small provided that $\theta_{\min}\approx0$ and/or $\theta_{\max}\approx\pi$ for fixed $T_0$.
\end{enumerate}
    
\end{remark}
\begin{proof} 
Fix $T\geqslant T_0.$ We make the choice
\begin{equation}\label{choice delta2}
    \delta_2(\xi,\mathtt{M},T)\triangleq\delta_1\big(f(\xi,\mathtt{M},T)\big),
\end{equation}
where $f(\xi,\mathtt{M},T)$ has to be precised (with upper bounds) along the proof and $\varepsilon\mapsto\delta_1(\varepsilon)$ is defined in Lemma \ref{lem convol}-$(iii).$ 
Since the solution is Lagrangian, then
\begin{equation}\label{bound AV}
    \sup_{0\leqslant t\leqslant T}\|\zeta(t,\cdot)\|_{L^{\infty}(\mathbb{S}^2)}=\sup_{0\leqslant t\leqslant T}\|\zeta_0\big(\phi^{-1}(t,\cdot)\big)\|_{L^{\infty}(\mathbb{S}^2)}=\|\zeta_0\|_{L^{\infty}(\mathbb{S}^2)}\leqslant\mathtt{M}.
\end{equation}
Then, applying Lemma \ref{lem convol}-$(iii)$ with the smallness assumption \eqref{small init} and the choice \eqref{choice delta2} imply
\begin{equation}\label{smallness velocity}
    \begin{aligned}
        \sup_{0\leqslant t\leqslant T}\|u(t,\cdot)-u_{\star}\|_{L^{\infty}(\mathbb{S}^2)}&\leqslant f(\xi,\mathtt{M},T)\sup_{0\leqslant t\leqslant T}\sqrt{\|\zeta(t,\cdot)-\zeta_{\star}\|_{L^{\infty}(\mathbb{S}^2)}}\\
        &\leqslant f(\xi,\mathtt{M},T)\sqrt{\mathtt{M}+\|\zeta_{\star}\|_{L^{\infty}(\mathbb{S}^2)}}.
    \end{aligned}
\end{equation}
Fix $\mathbf{x}=\psi_1(\theta_{\mathbf{x}},\varphi_{\mathbf{x}})\in\mathbb{S}^2$ with $\theta_{\min}\leqslant\theta_{\mathbf{x}}\leqslant\theta_{\max}$ and $t\in[0,T].$ Let us mention for later purposes that
\begin{equation}\label{LB sinus}
    \sin(\theta_{\mathbf{x}})\geqslant\min_{\theta\in[\theta_{\min},\theta_{\max}]}\sin(\theta)=\min\big(\sin(\theta_{\min}),\sin(\theta_{\max})\big)\triangleq\mathtt{m}(\theta_{\min},\theta_{\max})>0.
\end{equation}
$\blacktriangleright$ \textit{Colatitude confinement :} According to Proposition \ref{prop dtTP}, we can write
\begin{align*}
    \Theta(t,\mathbf{x})-\theta_{\mathbf{x}}&=\Theta(t,\mathbf{x})-\Theta(0,\mathbf{x})\\
    &=\int_{0}^{t}\partial_{t}\Theta(s,\mathbf{x})ds\\
    &=\int_{0}^{t}u_{\theta}\big(s,\phi(s,\mathbf{x})\big)ds.
\end{align*}
Since $u_{\star}$ is zonal, then $(u_{\star})_{\theta}\equiv 0.$ Consequently, taking 
$$f(\xi,\mathtt{M},T)\leqslant\tfrac{\xi}{T}\left(\mathtt{M}+\|\zeta_{\star}\|_{L^{\infty}(\mathbb{S}^2)}\right)^{-\frac{1}{2}},$$
we infer from \eqref{smallness velocity} that
\begin{align*}
    \left|\Theta(t,\mathbf{x})-\theta_{\mathbf{x}}\right|&\leqslant\int_{0}^{t}\left|\big(u_{\theta}-(u_{\star})_{\theta}\big)\big(s,\phi(s,\mathbf{x})\big)\right|ds\\
    &\leqslant T\sup_{0\leqslant t\leqslant T}\|u(t,\cdot)-u_{\star}\|_{L^{\infty}(\mathbb{S}^2)}\\
    &\leqslant\xi.
\end{align*}
$\blacktriangleright$ \textit{Longitude confinement :} Notice that
\begin{equation}\label{diff longitude flow}
    \begin{aligned}
    \Phi(t,\mathbf{x})-\big(\dot{\Phi}_{\star}(\theta_{\mathbf{x}})t+\varphi_{\mathbf{x}}\big)&=\Phi(t,\mathbf{x})-\Phi_{\star}(t,\mathbf{x})\\
    &=\int_{0}^{t}\Big[\partial_{t}\Phi\big(s,\phi(s,\mathbf{x})\big)-\partial_{t}\Phi_{\star}\big(s,\phi_{\star}(s,\mathbf{x})\big)\Big]ds.
\end{aligned}
\end{equation}
Take $s\in[0,t].$ According to Proposition \ref{prop dtTP}, we can write
\begin{equation}\label{e D123}
    \begin{aligned}
    \left|\partial_{t}\Phi\big(s,\phi(s,\mathbf{x})\big)-\partial_{t}\Phi_{\star}\big(s,\phi_{\star}(s,\mathbf{x})\big)\right|&=\left|\frac{1}{\sin\big(\Theta(s,\mathbf{x})\big)}u_{\varphi}\big(t,\phi(s,\mathbf{x})\big)-\frac{1}{\sin(\theta_{\mathbf{x}})}(u_{\star})_{\varphi}\big(\phi_{\star}(s,\mathbf{x})\big)\right|\\
    &\leqslant\mathcal{D}_1(s)+\mathcal{D}_2(s)+\mathcal{D}_3(s),
    \end{aligned}
\end{equation}
    where
    \begin{align*}
        \mathcal{D}_1(s)&\triangleq\left|u_{\varphi}\big(s,\phi(s,\mathbf{x})\big)\right|\left|\frac{1}{\sin\big(\Theta(s,\mathbf{x})\big)}-\frac{1}{\sin(\theta_{\mathbf{x}})}\right|,\\
        \mathcal{D}_2(s)&\triangleq\frac{1}{\sin(\theta_{\mathbf{x}})}\left|u_{\varphi}\big(s,\phi(s,\mathbf{x})\big)-(u_{\star})_{\varphi}\big(\phi(s,\mathbf{x})\big)\right|,\\
        \mathcal{D}_3(s)&\triangleq\frac{1}{\sin(\theta_{\mathbf{x}})}\left|(u_{\star})_{\varphi}\big(\phi(s,\mathbf{x})\big)-(u_{\star})_{\varphi}\big(\phi_{\star}(s,\mathbf{x})\big)\right|.
    \end{align*}
    \ding{226} \textit{Estimate of $\mathcal{D}_1(s)$ :} First observe that
    \begin{equation}\label{estim D1-1}
        \mathcal{D}_1(s)\leqslant\frac{\displaystyle\sup_{0\leqslant t\leqslant T}\|u(t,\cdot)\|_{L^{\infty}(\mathbb{S}^2)}}{\sin\big(\Theta(s,\mathbf{x})\big)\sin(\theta_{\mathbf{x}})}\left|\sin\big(\Theta(t,\mathbf{x})\big)-\sin(\theta_{\mathbf{x}})\right|.
    \end{equation}
    Applying Lemma \ref{lem around barotropic model}-$(ii)$ together with \eqref{bound AV}, we have the existence of a universal constant $C>0$ such that
    \begin{equation}\label{bound sup u}
        \displaystyle\sup_{0\leqslant t\leqslant T}\|u(t,\cdot)\|_{L^{\infty}(\mathbb{S}^2)}\leqslant C\left(\sup_{0\leqslant t\leqslant T}\|\zeta_{\star}(t,\cdot)\|_{L^{\infty}(\mathbb{S}^2)}+2|\gamma|\right)\leqslant C\left(\mathtt{M}+2|\gamma|\right).
    \end{equation}
    Using the fact the $\sin$ is $1$-Lipschitz and the proceeding as in the previous point with 
    $$f(\xi,\mathtt{M},T)\leqslant\frac{\xi}{6T^2C(\mathtt{M}+2|\gamma|)}\left(\mathtt{M}+\|\zeta_{\star}\|_{L^{\infty}(\mathbb{S}^2)}\right)^{-\frac{1}{2}},$$ 
    we get
    \begin{equation}\label{Lip sin}
        \left|\sin\big(\Theta(t,\mathbf{x})\big)-\sin(\theta_{\mathbf{x}})\right|\leqslant\left|\Theta(t,\mathbf{x})-\theta_{\mathbf{x}}\right|\leqslant\frac{\xi}{6TC(\mathtt{M}+2|\gamma|)}\cdot
    \end{equation}
    In particular, for 
    \begin{equation}\label{choice xi0}
        \xi_0=\xi_0(T_0,\theta_{\min},\theta_{\max})\triangleq3CT_0\mathtt{m}(\theta_{\min},\theta_{\max})>0,
    \end{equation}
    we get (since $\mathtt{M}\geqslant1$)
    \begin{equation}\label{LB sin}
        \begin{aligned}
            \sin\big(\Theta(t,\mathbf{x})\big)&\geqslant\sin(\theta_{\mathbf{x}})-\frac{\xi_0}{6CT_0(\mathtt{M}+2|\gamma|)}\\
            &\geqslant\mathtt{m}(\theta_{\min},\theta_{\max})-\frac{\xi_0}{6CT_0}\\
            &\geqslant\frac{\mathtt{m}(\theta_{\min},\theta_{\max})}{2}\cdot
        \end{aligned}
    \end{equation}
    Putting together \eqref{estim D1-1}, \eqref{LB sinus}, \eqref{bound sup u}, \eqref{Lip sin} and \eqref{LB sin}, we infer
    \begin{equation}\label{e D1}
        \mathcal{D}_1(t)\leqslant\frac{\xi}{3T\mathtt{m}^2(\theta_{\min},\theta_{\max})}\cdot
    \end{equation}
    \ding{226} \textit{Estimate of $\mathcal{D}_2(s)$ :} Using \eqref{smallness velocity} with 
    $$f(\xi,\mathtt{M},T)\leqslant\frac{\xi}{3T}\left(\mathtt{M}+\|\zeta_{\star}\|_{L^{\infty}(\mathbb{S}^2)}\right)^{-\frac{1}{2}},$$
    one immediately gets
    \begin{equation}\label{e D2}
        \mathcal{D}_2(s)\leqslant\frac{1}{\sin(\theta_{\mathbf{x}})}\sup_{0\leqslant t\leqslant T}\|u(t,\cdot)-u_{\star}\|_{L^{\infty}(\mathbb{S}^2)}\leqslant\frac{\xi}{3T\mathtt{m}^2(\theta_{\min},\theta_{\max})}\cdot
    \end{equation}
    \ding{226} \textit{Estimate of $\mathcal{D}_3(s)$ :} By construction,
    \begin{align*}
        \mathcal{D}_3(s)=\frac{1}{\sin(\theta_{\mathbf{x}})}\left|\partial_{\theta}G[\omega_{\star}]\big(\Theta(s,\mathbf{x})\big)-\partial_{\theta}G[\omega_{\star}]\big(\theta_{\mathbf{x}}\big)\right|.
    \end{align*}
    Without loss of generality, we can assume that $\Theta(s,\mathbf{x})\neq\theta_{\mathbf{x}}$, for the bound being trivial otherwise. Recalling the Remark \ref{remark differentiability}, the function $\partial_{\theta}G[\omega_{\star}]$ is continuous on the segment $[\Theta(s,\mathbf{x}),\theta_{\mathbf{x}}]$ and differentiable on the open interval $(\Theta(s,\mathbf{x}),\theta_{\mathbf{x}})$. By mean value Theorem, we can find a constant $M_{\star}(T)\geqslant0$ (depending on $T$ but uniform in $s$) such that
    $$\left|\partial_{\theta}G[\omega_{\star}]\big(\Theta(s,\mathbf{x})\big)-\partial_{\theta}G[\omega_{\star}]\big(\theta_{\mathbf{x}}\big)\right|\leqslant M_{\star}(T)\left|\Theta(s,\mathbf{x})-\theta_{\mathbf{x}}\right|.$$
    Then, proceeding as in the first point with $f(\xi,T)\leqslant\frac{\xi}{3T^2M_{\star}(T)},$ we obtain
    \begin{equation}\label{e D3}
        \mathcal{D}_3(s)\leqslant\frac{\xi}{3T\mathtt{m}^2(\theta_{\min},\theta_{\max})}\cdot
    \end{equation}
    Putting together \eqref{e D123}, \eqref{e D1}, \eqref{e D2} and \eqref{e D3} yields
    \begin{equation}\label{bound diff dtphi}
        \left|\partial_{t}\Phi\big(s,\phi(s,\mathbf{x})\big)-\partial_{t}\Phi_{\star}\big(s,\phi_{\star}(s,\mathbf{x})\big)\right|\leqslant\frac{\xi}{T\mathtt{m}^2(\theta_{\min},\theta_{\max})}\cdot
    \end{equation}
    Combining \eqref{diff longitude flow} and \eqref{bound diff dtphi} gives the desired result. This achieves the proof of Proposition \ref{prop confinement}.
\end{proof}
\begin{remark}
   In the previous proof, the number $f(\xi,\mathtt{M},T)$ decays like $\mathtt{M}^{-\frac{3}{2}}T^{-2}$ as $\mathtt{M}\to\infty$ and $T\to\infty.$
\end{remark}
\subsection{Stretching argument}
Through out this section, fix $\zeta_{\star}$ a monotone zonal vortex cap solution of \eqref{barotropic model} as in Theorem \ref{thm filamentation}. We consider $\mathtt{M}\geqslant1$ and a vortex cap solution
$t\mapsto\zeta(t,\cdot)$ of \eqref{barotropic model} with initial datum $\zeta_0$ satisfying
$$\|\zeta_0\|_{L^{\infty}(\mathbb{S}^2)}\leqslant\mathtt{M}$$
and admitting an interface $\Gamma(0)$ with, for some $0<|\mu|\ll1$,
$$\Gamma(0)\cap\{\theta=\theta_{k_0}\}\neq\varnothing,\qquad\textnormal{and}\qquad\Gamma(0)\cap\{\theta=\theta_{k_0}+\mu\}\neq\varnothing.$$
This provides the existence of $\mathbf{x}_0,\mathbf{x}_1\in\Gamma(0)$ such that
$$\theta_{\mathbf{x}_0}=\theta_{k_0},\qquad\textnormal{and}\qquad\theta_{\mathbf{x}_1}=\theta_{\mathbf{x}_1}(\mu)\triangleq\theta_{k_0}+\mu.$$
Without loss of generality, we can always assume that
\begin{equation}\label{confined init curve}
    \forall\mathbf{x}\in\Gamma(0)\setminus\{\mathbf{x}_0,\mathbf{x}_1\},\quad|\theta_{k_0}-\theta_{\mathbf{x}}|<|\mu|.
\end{equation}
\begin{lem}\label{lem alpha non zero}
    There exists $\mu_0>0$ such that for any $\mu\in(-\mu_0,\mu_0),$
    $$\dot{\Phi}_{\star}\big(\theta_{\mathbf{x}_1}(\mu)\big)-\dot{\Phi}_{\star}(\theta_{\mathbf{x}_0})\neq0,$$
    where $\dot{\Phi}_{\star}$ is defined in \eqref{phidot-dG}.
\end{lem}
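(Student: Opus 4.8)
The plan is to analyze the function $\mu \mapsto \dot{\Phi}_{\star}(\theta_{k_0}+\mu)$ near $\mu = 0$ using the explicit formula for $\partial_{\theta}G[\zeta_{\star}]$ from Lemma \ref{lem zonal} together with the identity \eqref{phidot-dG}. The key observation is that, by Remark \ref{remark differentiability}, the map $\theta \mapsto \partial_{\theta}G[\zeta_{\star}](\theta)$ is differentiable in a neighborhood of $\theta_{k_0}$ on each side of $\theta_{k_0}$ (since $\theta_{k_0}$ is one of the breakpoints $\theta_1,\dots,\theta_{N-1}$, one must be slightly careful), so $\dot{\Phi}_{\star}$ is at least continuous at $\theta_{k_0}$ and differentiable from the left and from the right. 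If I can show that the one-sided derivatives of $\dot{\Phi}_{\star}$ at $\theta_{k_0}$ are nonzero, then for $|\mu|$ small enough and $\mu\neq 0$, the difference $\dot{\Phi}_{\star}(\theta_{k_0}+\mu) - \dot{\Phi}_{\star}(\theta_{k_0})$ is a nonzero quantity of size $\approx |\text{derivative}|\cdot|\mu|$, which is exactly the claim.

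First I would compute $\dot{\Phi}_{\star}(\theta)$ explicitly for $\theta$ just below and just above $\theta_{k_0}$. For $\theta \in [\theta_{k_0-1},\theta_{k_0})$, formula \eqref{deriv Gzetastar} gives
$$
\partial_{\theta}G[\zeta_{\star}](\theta) = \frac{1}{\sin\theta}\left(\sum_{k=1}^{k_0-1}\omega_k\big(\cos\theta_{k-1}-\cos\theta_k\big) + \omega_{k_0}\big(\cos\theta_{k_0-1}-\cos\theta\big)\right),
$$
and for $\theta \in [\theta_{k_0},\theta_{k_0+1})$ one gets the analogous expression with $k_0$ replaced by $k_0+1$. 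Dividing by $\sin\theta$ and adding $\gamma$ yields $\dot{\Phi}_{\star}$. Then I would differentiate in $\theta$ and evaluate the one-sided limits at $\theta_{k_0}$; note the two one-sided derivatives differ precisely because the coefficient in front of the $-\cos\theta$ term jumps from $\omega_{k_0}$ to $\omega_{k_0+1}$, and $\omega_{k_0}\neq\omega_{k_0+1}$ by the vortex cap hypothesis. I would then show each one-sided derivative is nonzero: this is where the monotonicity hypothesis \eqref{monotone cond} enters, since it forces the "accumulated circulation" $\sum_{k=1}^{k_0-1}\omega_k(\cos\theta_{k-1}-\cos\theta_k) + \omega_{k_0}(\cos\theta_{k_0-1}-\cos\theta_{k_0})$ and the analogous expression for $k_0+1$ — together with the sign of $\omega_{k_0}$, $\omega_{k_0+1}$ relative to $\gamma$ — to have a definite sign, ruling out a degenerate critical point.

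Concretely, after differentiating I expect an expression of the form
$$
\frac{d}{d\theta}\dot{\Phi}_{\star}(\theta) = -\frac{\cos\theta}{\sin^3\theta}\,\big(\text{accumulated circulation up to }\theta\big) + \frac{\omega_j}{\sin^2\theta}\big(\text{something}\big),
$$
for $j = k_0$ on the left and $j = k_0+1$ on the right, and the task reduces to checking that the right-hand side does not vanish at $\theta = \theta_{k_0}$. I would handle the borderline case where one one-sided derivative happens to vanish by observing that it cannot vanish on both sides simultaneously (because the jump $\omega_{k_0}-\omega_{k_0+1}\neq 0$ changes the value), so at least one of the two sided limits of $\dot{\Phi}_{\star}(\theta_{k_0}+\mu)-\dot{\Phi}_{\star}(\theta_{k_0})$ as $\mu\to 0^{\pm}$ is a nonzero multiple of $\mu$; combined with the fact that $\dot\Phi_\star$ is real-analytic on each side (being a rational function of $\sin\theta,\cos\theta$), this forces the difference to be nonzero for all sufficiently small $|\mu|\neq 0$, possibly after shrinking $\mu_0$ to exclude the isolated zeros on whichever side is non-degenerate-but-not-linear. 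The main obstacle I anticipate is precisely this bookkeeping at the breakpoint $\theta_{k_0}$: making sure the case analysis (left vs. right, $\gamma\leq 0$ vs. $\gamma\geq 0$, possible vanishing of a one-sided derivative) is complete and that the monotonicity assumption is used correctly to pin down the signs; the algebra itself is routine, but the argument that the difference is genuinely nonzero (not merely generically nonzero) requires the analyticity-on-each-side remark to upgrade "nonzero derivative at the point" to "nonzero on a punctured neighborhood."
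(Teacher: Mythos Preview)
Your approach is essentially the same as the paper's --- compute $\dot{\Phi}_{\star}$ explicitly on each side of $\theta_{k_0}$ via Lemma~\ref{lem zonal} and expand in $\mu$ --- but your handling of the degenerate case has a gap. You assert that monotonicity forces each one-sided derivative to be nonzero; this is not true. The right derivative at $\theta_{k_0}$ equals $\sin^{-3}(\theta_{k_0})\big(\omega_{k_0+1}\sin^2(\theta_{k_0})-2C(k_0)\cos(\theta_{k_0})\big)$ with $C(k_0)=\sum_{k\leqslant k_0}\omega_k(\cos\theta_{k-1}-\cos\theta_k)$, and nothing in \eqref{monotone cond} prevents this from vanishing. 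Your fallback (``not both sides can vanish simultaneously, then use analyticity'') is logically insufficient: knowing the derivative is nonzero on the \emph{right} tells you nothing about nonvanishing of the difference on the \emph{left}, and the lemma demands nonvanishing for all small $\mu$ of both signs.

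The paper closes this gap by pushing the Taylor expansion to second order: when the first-order coefficient vanishes, the relation $\omega_{k_0+1}\sin^2(\theta_{k_0})=2C(k_0)\cos(\theta_{k_0})$ is substituted into the second-order coefficient, which then simplifies to $C(k_0)/\sin^2(\theta_{k_0})$; and monotonicity is used only to show that $C(k_0)$ and $\omega_{k_0+1}$ cannot both vanish (so one is in either the nondegenerate first-order case or the nondegenerate second-order case). Your analyticity idea can be salvaged, but the correct statement is that on \emph{each} side separately one must rule out $\dot{\Phi}_{\star}$ being constant on the adjacent interval; writing $\dot{\Phi}_{\star}(\theta)=\gamma+(A-\omega_j\cos\theta)/(1-\cos^2\theta)$ with $A=C(k_0)+\omega_{k_0+1}\cos\theta_{k_0}$ (resp.\ the analogous constants on the left), constancy forces $\omega_j=0$ and $A=0$ simultaneously, hence $\omega_{k_0+1}=0$ and $C(k_0)=0$, which is exactly what the paper's monotonicity observation excludes. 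So the fix is available, but it is not the argument you sketched.
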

\begin{proof}
    We perform Taylor expansions. For a fixed $\theta\in\mathbb{R},$ we have
    $$\cos(\theta+h)\underset{h\to0}{=}\cos(\theta)-\sin(\theta)h-\tfrac{1}{2}\cos(\theta)h^2+o(h^2)$$
    and
    \begin{align*}
        \frac{1}{\sin^2(\theta+h)}&\underset{h\to0}{=}\frac{1}{\left(\sin(\theta)+\cos(\theta)h-\frac{1}{2}\sin(\theta)h^2+o(h^2)\right)^2}\\
        &\underset{h\to0}{=}\frac{1}{\sin^2(\theta)+2\sin(\theta)\cos(\theta)h+\left(\cos^2(\theta)-\sin^2(\theta)\right)h^2+o(h^2)}\\
        &\underset{h\to0}{=}\frac{1}{\sin^2(\theta)}\cdot\frac{1}{1+2\cot(\theta)h+\left(\cot^2(\theta)-1\right)h^2+o(h^2)}\\
        &\underset{h\to0}{=}\frac{1}{\sin^2(\theta)}\left[1-2\cot(\theta)h+\left(1+3\cot^2(\theta)\right)h^2+o(h^2)\right]\\
        &\underset{h\to0}{=}\frac{1}{\sin^2(\theta)}-\frac{2\cot(\theta)}{\sin^2(\theta)}h+\frac{1+3\cot^2(\theta)}{\sin^2(\theta)}h^2+o(h^2).
    \end{align*}
    Combining both, one also gets
    \begin{align*}
        \frac{\cos(\theta)-\cos(\theta+h)}{\sin^2(\theta+h)}&\underset{h\to0}{=}\left[\frac{1}{\sin^2(\theta)}-\frac{2\cot(\theta)}{\sin^2(\theta)}h+\frac{1+3\cot^2(\theta)}{\sin^2(\theta)}h^2+o(h^2)\right]\left[\sin(\theta)h+\tfrac{1}{2}\cos(\theta)h^2+o(h^2)\right]\\
        &\underset{h\to0}{=}\frac{h}{\sin(\theta)}+\left(\frac{\cos(\theta)}{2\sin^2(\theta)}-\frac{2\cot(\theta)}{\sin(\theta)}\right)h^2+o(h^2)\\
        &\underset{h\to0}{=}\frac{h}{\sin(\theta)}-\frac{3\cos(\theta)}{2\sin^2(\theta)}h^2+o(h^2).
    \end{align*}
    $\blacktriangleright$ \textit{Case $\mu>0$ :} For $\mu$ small enough, we have $\theta_{\mathbf{x}_1}\in(\theta_{k_0},\theta_{k_0+1}).$ Therefore, we can apply Lemma \ref{lem zonal} together with \eqref{phidot-dG} and write
    $$\dot{\Phi}_{\star}(\theta_{\mathbf{x}_1})-\dot{\Phi}(\theta_{\mathbf{x}_0})=\left(\frac{1}{\sin^2(\theta_{\mathbf{x}_1})}-\frac{1}{\sin^2(\theta_{\mathbf{x}_0})}\right)C(k_0)+\frac{\omega_{k_0+1}}{\sin^2(\theta_{\mathbf{x}_1})}\big(\cos(\theta_{\mathbf{x}_0})-\cos(\theta_{\mathbf{x}_1})\big),$$
    where we denoted
    $$C(k_0)\triangleq\sum_{k=1}^{k_0}\omega_k\big(\cos(\theta_{k-1})-\cos(\theta_k)\big).$$
    Using the Taylor expansions done at the beginning of the proof with $\theta=\theta_{k_0}=\theta_{\mathbf{x}_0}$ and $h=\mu$, we infer
    \begin{align*}
        \dot{\Phi}_{\star}(\theta_{\mathbf{x}_1})-\dot{\Phi}(\theta_{\mathbf{x}_0})&\underset{\mu\to0}{=}\left(\frac{\omega_{k_0+1}}{\sin(\theta_{\mathbf{x}_0})}-\frac{2C(k_0)\cot(\theta_{\mathbf{x}_0})}{\sin^2(\theta_{\mathbf{x}_0})}\right)\mu\\
        &\quad+\left(\frac{1+3\cot^2(\theta_{\mathbf{x}_0})}{\sin^2(\theta_{\mathbf{x}_0})}C(k_0)-\frac{3\omega_{k_0+1}\cos(\theta_{\mathbf{x}_0})}{2\sin^2(\theta_{\mathbf{x}_0})}\right)\mu^2+o(\mu^2)\\
        &\underset{\mu\to0}{=}\frac{1}{\sin^3(\theta_{\mathbf{x}_0})}\left(\omega_{k_0+1}\sin^2(\theta_{\mathbf{x}_0})-2C(k_0)\cos(\theta_{\mathbf{x}_0})\right)\mu\\
        &\quad+\frac{1}{2\sin^{4}(\theta_{\mathbf{x}_0})}\Big(2C(k_0)\big(\sin^2(\theta_{\mathbf{x}_0})+3\cos^2(\theta_{\mathbf{x}_0})\big)-3\omega_{k_0+1}\sin^2(\theta_{\mathbf{x}_0})\cos(\theta_{\mathbf{x}_0})\Big)\mu^2+o(\mu^2).
    \end{align*}
    First observe that the monotonicity condition \eqref{monotone cond} together with \eqref{order thetas} imply
    $$\omega_{k_0+1}=0\qquad\Rightarrow\qquad C(k_0)\neq0.$$
    By contraposition,
    $$C(k_0)=0\qquad\Rightarrow\qquad\omega_{k_0+1}\neq0.$$
    So 
    $$C(k_0)=0\qquad\Rightarrow\qquad\dot{\Phi}_{\star}(\theta_{\mathbf{x}_1})-\dot{\Phi}(\theta_{\mathbf{x}_0})\underset{\mu\to0}{\sim}\frac{\omega_{k_0+1}\mu}{\sin(\theta_{\mathbf{x}_0})},$$
    which allows to conclude. Therefore, in what follows, we assume 
    $$C(k_0)\neq0.$$
    Similarly, we get
    $$\omega_{k_0+1}\sin^2(\theta_{\mathbf{x}_0})-2C(k_0)\cos(\theta_{\mathbf{x}_0})\neq0\quad\Rightarrow\quad\dot{\Phi}_{\star}(\theta_{\mathbf{x}_1})-\dot{\Phi}(\theta_{\mathbf{x}_0})\underset{\mu\to0}{\sim}\frac{\mu}{\sin^3(\theta_{\mathbf{x}_0})}\left(\omega_{k_0+1}\sin^2(\theta_{\mathbf{x}_0})-2C(k_0)\cos(\theta_{\mathbf{x}_0})\right)$$ 
    and we obtain the desired result. Now assume that we are in the situation where $$\omega_{k_0+1}\sin^2(\theta_{\mathbf{x}_0})=2C(k_0)\cos(\theta_{\mathbf{x}_0}).$$
    We get the asymptotic
    $$\dot{\Phi}_{\star}(\theta_{\mathbf{x}_1})-\dot{\Phi}(\theta_{\mathbf{x}_0})\underset{\mu\to0}{\sim}\frac{C(k_0)\mu^2}{\sin^2(\theta_{\mathbf{x}_0})},$$
    which once again allows to conclude.\\
    $\blacktriangleright$ \textit{Case $\mu<0$ :} For $|\mu|$ small enough, we have $\theta_{\mathbf{x}_1}\in(\theta_{k_0-1},\theta_{k_0}).$ Therefore, we can apply Lemma \ref{lem zonal} together with \eqref{phidot-dG} and write
    $$\dot{\Phi}_{\star}(\theta_{\mathbf{x}_1})-\dot{\Phi}(\theta_{\mathbf{x}_0})=\frac{1}{\sin^2(\theta_{\mathbf{x}_1})}C(k_0-1)+\frac{\omega_{k_0}}{\sin^2(\theta_{\mathbf{x}_1})}\big(\cos(\theta_{k_0-1})-\cos(\theta_{\mathbf{x}_1})\big)-\frac{1}{\sin^2(\theta_{\mathbf{x}_0})}C(k_0).$$
    Notice that
    $$C(k_0)=C(k_0-1)+\omega_{k_0}\big(\cos(\theta_{k_0-1})-\cos(\theta_{k_0})\big).$$
    Hence, we obtain the new formula
    $$\dot{\Phi}_{\star}(\theta_{\mathbf{x}_1})-\dot{\Phi}(\theta_{\mathbf{x}_0})=\left(\frac{1}{\sin^2(\theta_{\mathbf{x}_1})}-\frac{1}{\sin^2(\theta_{\mathbf{x}_0})}\right)C(k_0-1)+\frac{\omega_{k_0}}{\sin^2(\theta_{\mathbf{x}_1})}\big(\cos(\theta_{\mathbf{x}_0})-\cos(\theta_{\mathbf{x}_1})\big).$$
    Using the Taylor expansions done at the beginning of the proof with $\theta=\theta_{k_0}=\theta_{\mathbf{x}_0}$ and $h=\mu$, we infer
    \begin{align*}
        \dot{\Phi}_{\star}(\theta_{\mathbf{x}_1})-\dot{\Phi}(\theta_{\mathbf{x}_0})&\underset{\mu\to0}{=}\frac{1}{\sin^3(\theta_{\mathbf{x}_0})}\left(\omega_{k_0}\sin^2(\theta_{\mathbf{x}_0})-2C(k_0-1)\cos(\theta_{\mathbf{x}_0})\right)\mu\\
        &\quad+\frac{1}{2\sin^{4}(\theta_{\mathbf{x}_0})}\Big(2C(k_0-1)\big(\sin^2(\theta_{\mathbf{x}_0})+3\cos^2(\theta_{\mathbf{x}_0})\big)-3\omega_{k_0}\sin^2(\theta_{\mathbf{x}_0})\cos(\theta_{\mathbf{x}_0})\Big)\mu^2+o(\mu^2).
    \end{align*}
    At this point, we can proceed as before with $C(k_0)\leadsto C(k_0-1)$ and $\omega_{k_0+1}\leadsto\omega_{k_0}$. This achieves the proof of Lemma \ref{lem alpha non zero}.
\end{proof}
We parametrize the curve $\Gamma(0)$ by
$$\gamma:[0,1]\longrightarrow \Gamma(0),\qquad\gamma(0)=\mathbf{x}_0,\qquad\gamma(1)=\mathbf{x}_1.$$ 
Then, we set for any $t\geqslant0$ and any $s\in[0,1],$
$$\gamma_t(s)\triangleq\phi(t,\gamma(s))\qquad\textnormal{so that}\qquad\forall\ell\in\{0,1\},\quad\phi(t,\mathbf{x}_{\ell})=\gamma_t(\ell).$$
Our aim is to prove the following result giving a lower bound on the length of the curve $\gamma_{T}([0,1]).$ 
\begin{prop}\label{prop length}
Let $\mu_0$ as in Lemma \ref{lem alpha non zero}. Then, for any $\mu\in(-\mu_0,\mu_0)$, there exist $\kappa\triangleq\kappa(\mu)>0$ and $T_0\triangleq T_0(\mu)>0$ such that for any $T\geqslant T_0,$ there exists $\overline{\delta}\triangleq\overline{\delta}(\mu,\mathtt{M},T)>0$ such that if
\begin{equation}\label{small length}
    \|\zeta_0-\zeta_{\star}\|_{L^1(\mathbb{S}^2)}<\overline{\delta},
\end{equation}
then we have
$$\textnormal{Length}\big(\gamma_T([0,1])\big)\geqslant\kappa(T-T_0).$$
\end{prop}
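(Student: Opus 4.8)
The plan is to bound the length of $\gamma_T([0,1])$ from below by the longitudinal displacement between its endpoints $\gamma_T(0)=\phi(T,\mathbf{x}_0)$ and $\gamma_T(1)=\phi(T,\mathbf{x}_1)$, and then to estimate that displacement using the flow-confinement Proposition \ref{prop confinement}. First I would fix $\mu\in(-\mu_0,\mu_0)$ with $\mu_0$ as in Lemma \ref{lem alpha non zero}, and set $\alpha(\mu)\triangleq\dot{\Phi}_{\star}(\theta_{\mathbf{x}_1}(\mu))-\dot{\Phi}_{\star}(\theta_{\mathbf{x}_0})$, which is nonzero by that lemma. Then I would choose $\theta_{\min},\theta_{\max}$ with $0<\theta_{\min}<\theta_{\max}<\pi$ so that the whole initial curve $\Gamma(0)$ sits in the strip $\{\theta_{\min}\leqslant\theta\leqslant\theta_{\max}\}$ — this is possible thanks to the confinement assumption \eqref{confined init curve}, which forces every point of $\Gamma(0)$ to lie within $|\mu|$ of $\theta_{k_0}$, so one may take e.g. $\theta_{\min}=\theta_{k_0}-|\mu|-\varepsilon_0$, $\theta_{\max}=\theta_{k_0}+|\mu|+\varepsilon_0$ for small $\varepsilon_0$ keeping us off the poles. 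This fixes $\xi_0=\xi_0(T_0,\theta_{\min},\theta_{\max})$ from Proposition \ref{prop confinement}; set $T_0\triangleq T_0(\mu)$ to be some fixed value making $\xi_0$ of a convenient size, and define $\kappa\triangleq\kappa(\mu)$ as a small multiple of $|\alpha(\mu)|$.

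Next, given $T\geqslant T_0$, I would pick $\xi\leqslant\xi_0$ small enough that $\tfrac{2\xi}{\mathtt{m}^2(\theta_{\min},\theta_{\max})}\leqslant\tfrac12|\alpha(\mu)|(T-T_0)$ is \emph{not} quite the right scaling — rather, since the confinement error in the longitude is $O(\xi)$ and the main term grows like $|\alpha(\mu)|t$, I would choose $\xi$ so that the error is at most, say, $\tfrac14|\alpha(\mu)|T_0$, and then set $\overline{\delta}\triangleq\delta_2(\xi,\mathtt{M},T)$ from Proposition \ref{prop confinement}. Under \eqref{small length}, Proposition \ref{prop confinement} applied to both $\mathbf{x}_0$ and $\mathbf{x}_1$ (both of whose co-latitudes lie in $[\theta_{\min},\theta_{\max}]$) gives
$$\Phi(T,\mathbf{x}_\ell)=\dot{\Phi}_{\star}(\theta_{\mathbf{x}_\ell})T+\varphi_{\mathbf{x}_\ell}+O\!\left(\tfrac{\xi}{\mathtt{m}^2}\right),\qquad \ell\in\{0,1\},$$
so by the triangle inequality
$$\big|\Phi(T,\mathbf{x}_1)-\Phi(T,\mathbf{x}_0)\big|\geqslant |\alpha(\mu)|\,T-|\varphi_{\mathbf{x}_1}-\varphi_{\mathbf{x}_0}|-\tfrac{2\xi}{\mathtt{m}^2}\geqslant |\alpha(\mu)|(T-T_0)$$
after absorbing the constant $|\varphi_{\mathbf{x}_1}-\varphi_{\mathbf{x}_0}|$ (which is bounded, indeed $O(|\mu|)$, independent of $T$) and the $\xi$-error into a redefinition of $T_0$ if necessary. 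The point is that the lifted longitude $\widetilde\pi_\varphi$ records total winding, so this controls genuine arclength and not just the distance of the endpoints on $\mathbb{S}^2$.

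Finally I would convert the longitudinal displacement into a lower bound on the length. Using the formula for the length of a curve on $\mathbb{S}^2$ in the chart $\psi_1$, for any path $c(s)=\psi_1(\vartheta(s),\varphi(s))$ one has $\mathrm{Length}(c)=\int_0^1\sqrt{\dot\vartheta(s)^2+\sin^2(\vartheta(s))\,\dot\varphi(s)^2}\,ds\geqslant \mathtt{m}\int_0^1|\dot\varphi(s)|\,ds\geqslant \mathtt{m}\,|\varphi(1)-\varphi(0)|$, \emph{provided} the curve $\gamma_T([0,1])$ stays in the strip $[\theta_{\min},\theta_{\max}]$ away from the poles — which is exactly guaranteed by the co-latitude part of Proposition \ref{prop confinement} applied to \emph{every} point of $\Gamma(0)$ (here one uses \eqref{confined init curve} again, so all of $\Gamma(0)$ is in the admissible strip and its image stays in a slightly larger strip still off the poles, after possibly enlarging $[\theta_{\min},\theta_{\max}]$ by $\xi$ at the outset). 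Hence
$$\mathrm{Length}\big(\gamma_T([0,1])\big)\geqslant \mathtt{m}(\theta_{\min},\theta_{\max})\,\big|\Phi(T,\mathbf{x}_1)-\Phi(T,\mathbf{x}_0)\big|\geqslant \mathtt{m}(\theta_{\min},\theta_{\max})\,|\alpha(\mu)|\,(T-T_0),$$
and setting $\kappa(\mu)\triangleq\mathtt{m}(\theta_{\min},\theta_{\max})\,|\alpha(\mu)|$ finishes the proof. The main obstacle is the bookkeeping of the quantifiers: $\xi$ must be chosen small relative to $|\alpha(\mu)|$ and the geometry (so it depends on $\mu$), $T_0$ must be fixed before $\xi_0$ is determined yet $\xi_0$ depends on $T_0$, and $\overline\delta=\delta_2(\xi,\mathtt M,T)$ then depends on all of $\mu,\mathtt M,T$; one has to lay out this dependency chain carefully so it matches the statement of Theorem \ref{thm filamentation}. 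A secondary subtlety is ensuring the evolved curve genuinely stays off the poles so that the lower bound $\sin\geqslant\mathtt{m}>0$ in the length integral is legitimate — this is where applying the confinement to all of $\Gamma(0)$, not just the two marked points, is essential.
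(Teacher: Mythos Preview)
Your proposal is correct and follows essentially the same route as the paper: bound the length from below by $\inf\sin(\Theta)\cdot\big|\Phi(T,\mathbf{x}_1)-\Phi(T,\mathbf{x}_0)\big|$, use Proposition~\ref{prop confinement} on all of $\Gamma(0)$ to control $\sin(\Theta)$ away from zero and on the two marked points to show the longitudinal gap grows like $|\alpha(\mu)|T$, then absorb the bounded errors into $T_0(\mu)$. One small inaccuracy: $|\varphi_{\mathbf{x}_1}-\varphi_{\mathbf{x}_0}|$ need not be $O(|\mu|)$ (the two points can sit at arbitrary longitudes on $\Gamma(0)$), but as you note it is bounded independently of $T$, which is all that is required; the paper likewise folds this constant into $T_\pm(\mu)$ and hence $T_0(\mu)$.
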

\begin{proof}
    Let us recall that on the sphere, the length is given by
$$\textnormal{Length}\big(\gamma_T([0,1])\big)=\int_{0}^{1}\sqrt{\mathtt{g}_{\mathbb{S}^2}\big(\dot{\gamma}_T(s),\dot{\gamma}_T(s)\big)_{\gamma_T(s)}}ds,$$
where $\mathtt{g}_{\mathbb{S}^2}$ is the metric introduced in \eqref{def metric}.
By construction, for any $\mathbf{x}\in\mathbb{S}^2$ and any $(\alpha_1,\alpha_2,\beta_1,\beta_2)\in\mathbb{R}^4,$ we have
$$\mathtt{g}_{\mathbb{S}^2}\big(\alpha_1\mathbf{e}_{\theta}(\mathbf{x})+\beta_1\mathbf{e}_{\varphi}(\mathbf{x}),\alpha_2\mathbf{e}_{\theta}(\mathbf{x})+\beta_2\mathbf{e}_{\varphi}(\mathbf{x})\big)_{\mathbf{x}}=\alpha_1\alpha_2+\beta_1\beta_2.$$
Besides, differentiating \eqref{flow in local chart}, we get
\begin{align*}
    \dot{\gamma}_T(s)&=\partial_{s}\big[\phi\big(T,\gamma(s)\big)\big]\\
    &=\partial_{s}\left[\psi_1\Big(\Theta\big(T,\gamma(s)\big),\Phi\big(T,\gamma(s)\big)\Big)\right]\\
    &=\partial_s\left[\Theta\big(T,\gamma(s)\big)\right]\partial_{\theta}\psi_1\Big(\Theta\big(T,\gamma(s)\big),\Phi\big(T,\gamma(s)\big)\Big)+\partial_s\left[\Phi\big(T,\gamma(s)\big)\right]\partial_{\varphi}\psi_1\Big(\Theta\big(T,\gamma(s)\big),\Phi\big(T,\gamma(s)\big)\Big)\\
    &=\partial_s\left[\Theta\big(T,\gamma(s)\big)\right]\mathbf{e}_{\theta}\Big(\phi\big(T,\gamma(s)\big)\Big)+\sin\Big(\Theta\big(T,\gamma(s)\big)\Big)\partial_s\left[\Phi\big(T,\gamma(s)\big)\right]\mathbf{e}_{\varphi}\Big(\phi\big(T,\gamma(s)\big)\Big).
\end{align*}
We deduce that
$$\textnormal{Length}\big(\gamma_T([0,1])\big)=\int_{0}^{1}\sqrt{\Big(\partial_s\left[\Theta\big(T,\gamma(s)\big)\right]\Big)^2+\sin^2\Big(\Theta\big(T,\gamma(s)\big)\Big)\Big(\partial_s\left[\Phi\big(T,\gamma(s)\big)\right]\Big)^2}ds.$$
Let us take 
$$\theta_{\min}(\mu)\triangleq\theta_{1}-|\mu|,\qquad\theta_{\max}(\mu)\triangleq\theta_{N-1}+|\mu|$$
and
$$\xi(\mu)\triangleq\min\Big(\tfrac{|\mu|}{2}\sin^2\big(\theta_{\min}(\mu)\big),\tfrac{|\mu|}{2}\sin^2\big(\theta_{\max}(\mu)\big),\xi_0\big(1,\theta_{\min}(\mu),\theta_{\max}(\mu)\big)\Big),$$
where $\xi_0$ is defined in Proposition \ref{prop confinement}. If $\mu_0$ is small enough, then $\theta_{\min}(\mu)>0$ and $\theta_{\max}(\mu)<\pi.$
We choose 
\begin{equation}\label{choice delta-bar}
    \overline{\delta}(\mu,\mathtt{M},T)\triangleq\delta_2\big(\xi(\mu),\mathtt{M},T),
\end{equation}
where $\varepsilon\mapsto\delta_2(\varepsilon)$ is defined in Proposition \ref{prop confinement}. The smallness assumption \eqref{small length} together with the choice \eqref{choice delta-bar} and the property \eqref{confined init curve} allow to apply Proposition \ref{prop confinement} and get that for any $s\in[0,1]$ and if $T\geqslant1,$
\begin{equation}\label{confin length}
    \left|\Theta(T,\gamma(s)\big)-\theta_{\gamma(s)}\right|\leqslant\xi(\mu)\qquad\textnormal{and}\qquad\left|\Phi\big(T,\gamma(s)\big)-\big(\Phi_{\star}(\theta_{\gamma(s)})T+\varphi_{\gamma(s)}\big)\right|\leqslant\frac{|\mu|}{2}\cdot
\end{equation}
The first condition in \eqref{confin length} together with the fact that $\sin$ is $1$-Lipschitz and the choice of $\xi(\mu)$ imply that if $T\geqslant1,$ we get
$$\inf_{s\in[0,1]}\sin\Big(\Theta\big(T,\gamma(s)\big)\Big)\geqslant\frac{1}{2}\min\Big(\sin\big(\theta_{\min}(\mu)\big),\sin\big(\theta_{\max}(\mu)\big)\Big)\triangleq \beta(\mu)>0.$$
Consequently,
\begin{equation}\label{bnd1 legnth}
    \begin{aligned}
    \textnormal{Length}\big(\gamma_T([0,1])\big)&\geqslant \beta(\mu)\int_{0}^{1}\left|\partial_{s}\left[\Phi\big(T,\gamma(s)\big)\right]\right|ds\\
    &\geqslant \beta(\mu)\left|\int_{0}^{1}\partial_{s}\Phi\big(T,\gamma(s)\big)ds\right|\\
    &=\beta(\mu)\left|\Phi\big(T,\gamma(1)\big)-\Phi\big(T,\gamma(0)\big)\right|\\
    &=\beta(\mu)\left|\Phi(T,\mathbf{x}_1)-\Phi(T,\mathbf{x}_0)\right|.
\end{aligned}
\end{equation}
Applying the second condition in \eqref{confin length} with $s\in\{0,1\}$, we get
$$\alpha(\mu)\big(T-T_{-}(\mu)\big)\leqslant\Phi(T,\mathbf{x}_1)-\Phi(T,\mathbf{x}_0)\leqslant\alpha(\mu)\big(T-T_+(\mu)\big),$$
where
$$\alpha(\mu)\triangleq\dot{\Phi}_{\star}\big(\theta_{\mathbf{x}_1}(\mu)\big)-\dot{\Phi}_{\star}(\theta_{\mathbf{x}_0}),\qquad T_{\pm}(\mu)\triangleq\frac{\varphi_{\mathbf{x}_0}-\varphi_{\mathbf{x}_1}}{\dot{\Phi}_{\star}\big(\theta_{\mathbf{x}_1}(\mu)\big)-\dot{\Phi}_{\star}(\theta_{\mathbf{x}_0})}\mp\frac{|\mu|}{\dot{\Phi}_{\star}\big(\theta_{\mathbf{x}_1}(\mu)\big)-\dot{\Phi}_{\star}(\theta_{\mathbf{x}_0})}\cdot$$
Let us mention that, by virtue of Lemma \ref{lem alpha non zero}, $\alpha(\mu)\neq0$ for $\mu\in(-\mu_0,\mu_0).$ We denote
$$\kappa(\mu)\triangleq \beta(\mu)|\alpha(\mu)|>0\qquad\textnormal{and}\qquad T_0(\mu)\triangleq\max\big(1,T_{-}(\mu),T_{+}(\mu)\big)>0.$$
We deduce that for $T\geqslant T_0(\mu),$ we have
$$\left|\Phi(T,\mathbf{x}_1)-\Phi(T,\mathbf{x}_0)\right|\geqslant|\alpha(\mu)|\min\big(T-T_-(\mu),T-T_+(\mu)\big)\geqslant|\alpha(\mu)|\big(T-T_0(\mu)\big).$$
Plugging this into \eqref{bnd1 legnth} gives
$$\textnormal{Length}\big(\gamma_T([0,1])\big)\geqslant\kappa(\mu)\big(T-T_0(\mu)\big).$$
This ends the proof of Proposition \ref{prop length}.
\end{proof}
\begin{remark}
    We end this document by mentioning that our analysis shows that the orientation of the filament depends asymptotically on the sign of $\alpha(\mu).$ This latter can be tracked along the proof of Lemma \ref{lem alpha non zero}.
\end{remark}

	\vspace{2cm}
	\noindent GIAN MARCO MARIN: School of Mathematics, Georgia Institute of Technology, Atlanta GA.\\
	E-mail address: \href{mailto:gmarin8@gatech.edu}{\texttt{gmarin8@gatech.edu}}\\
    
	\noindent EMERIC ROULLEY: SISSA International School for Advanced Studies, Via Bonomea 265 Trieste, 34136 Italy.\\
	E-mail address: \href{mailto:eroulley@sissa.it}{\texttt{eroulley@sissa.it}}
\end{document}